\def\N{\mathbb{N}}
\def\R{\mathbb{R}}
\def\E{\mathbb{E}}
\def\Id{{\rm I}_d}
\def \T{\intercal}
\newtheorem{theorem}{Theorem}[section]
\newtheorem{lemma}[theorem]{Lemma}
\newtheorem{corollary}[theorem]{Corollary}
\newtheorem{proposition}[theorem]{Proposition}
\theoremstyle{definition}
\newtheorem{definition}[theorem]{Definition}
\theoremstyle{remark}
\newtheorem{remark}[theorem]{Remark}
\title{\vspace{-2.2em}
{Linearization of Monge--Ampère Equations and Statistical Applications}}
\date{\today}
\author{
  Alberto Gonz{\'a}lez-Sanz%
  \thanks{Department of Statistics, Columbia University, ag4855@columbia.edu.},  \quad\quad\quad
  Shunan Sheng%
  \thanks{Department of Statistics, Columbia University, ss6574@columbia.edu.}
  }
\begin{document}
\maketitle

\begin{abstract}
Optimal transport has found numerous applications across data science, many of which require differentiating the optimal transport map with respect to the underlying probability densities in the Fréchet sense. In this work, we show that when the reference measure $Q$ is sufficiently regular in space and the curve of target measures $\{P_t\}_{t\in I}$ is both spatially regular and $\mathcal{C}^1$ in time, then the associated curve of optimal transport maps $\{\nabla \phi_t\}_{t\in I}$ pushing $Q$ toward $P_t$ is itself a $\mathcal{C}^1$ curve. Moreover, we identify its time derivative as the solution to the \emph{linearized Monge--Ampère equation}, a second-order elliptic PDE with strictly oblique boundary conditions and a vanishing zero-order term. Our proof relies on applying the implicit function theorem to the Monge--Ampère equation with natural boundary conditions. As consequences, we establish regularity of the transport-based quantile regressor with respect to the covariates and derive a central limit theorem for smooth optimal transport maps.
\end{abstract}

\maketitle

\section{Introduction}
The optimal transport problem has become increasingly popular in data science, appearing frequently in both statistics~\cite{carlier2016vector,delBarrioQuantile2024,HallinBarrio2021,hosseini2024conditionaloptimaltransportfunction} and machine learning, including Bayesian inverse problems~\cite{hosseini2024conditionaloptimaltransportfunction,kerrigan2024dynamicconditionaloptimaltransport}, generative modeling~\cite{Wgans,albergo2023stochastic}, and deep learning~\cite{castin2024smoothattention}, among others. 
In many of these applications (see, e.g.,~\cite{hamm2024manifoldlearningwassersteinspace,hosseini2024conditionaloptimaltransportfunction,manole2023central,sheng2025stability,sheng2025theory}), an important and persistent challenge is to quantify how optimal transport maps change when the source measure is fixed and the target measure is perturbed. In particular, let $I \subset \mathbb{R}$ be an open interval, and let $\{P_t\}_{t\in I}$ denote a family of probability measures with densities $\{p_t\}_{t\in I}$ and supports $\{\Omega_t\}_{t\in I}$. 
The present work identifies sufficient conditions on $\{P_t\}_{t\in I}$ and a reference measure $Q \in \mathcal{P}(\Omega)$ guaranteeing that the curve of optimal transport maps $\{\nabla \phi_t\}_{t\in I} \subset \CC^{1,\alpha}(\Omega)$, pushing $Q$ forward to $P_t$, solving
\begin{equation}\label{eq:MongeAmpere}
    \begin{aligned}
        \det(D^2\phi_t) &= \frac{q}{p_t(\nabla\phi_t)} \qquad \text{in } \Omega,\\
        \nabla\phi_t(\Omega) &= \Omega_t,
    \end{aligned}
\end{equation}
is a $\CC^1$ curve in time.

Understanding the time regularity of optimal transport maps under perturbations provides tools for establishing central limit theorems~\cite{del2025distributional,manole2023central} and for analyzing the statistical properties of plug-in estimators~\cite{Manole2024Plugin,balakrishnan2025stability,balakrishnan2025statistical}. 
Moreover, such regularity can be used to study the smoothness of velocity fields in manifold learning problems in the Wasserstein space~\cite{hamm2024manifoldlearningwassersteinspace}.

Although the spatial regularity of $(t,x)\mapsto \phi(t,x):=\phi_t(x)$ is well understood from Caffarelli's regularity theory (see~\cite{Caffarelli1990,caffarelli1992boundary,delBarrioAdvNon2024,figalli2017monge,Figalli2018}), the time regularity is subtler.  
In~\cite[Proposition~4.1]{loeper2005regularity}, the author shows that if the supports $\{\Omega_t\}_{t\in I}$ are constant in $t$ and both the densities and supports are $\CC^\infty$, then $\phi$ is differentiable in $t$. 
As in the present work, the proof relies on an application of the implicit function theorem. 
Furthermore, in~\cite{loeper2006fully}, the same author uses this result to establish the existence and uniqueness of classical solutions to the semigeostrophic equations on a three-dimensional flat torus.

More recent results in~\cite{manole2023central} also focus on the setting where both source and target measures are supported on the flat torus $\mathbb{T}^d=\mathbb{R}^d/\mathbb{Z}^d$, and the transport maps are periodic (see~\cite{CORDEROERAUSQUIN1999199}). 
Their analysis relies crucially on the periodic boundary conditions of the Monge--Ampère equation on the torus, and therefore does not extend to the setting of interest here, particularly when the supports $\{\Omega_t\}_{t\in I}$ vary in time.

A related but distinct line of work concerns stability of optimal transport maps, with extensive literature on both qualitative and quantitative aspects.  
For example, qualitative results include: Theorem~4.2 in~\cite{philippis2013regularity}, which states that if $p_{t'} \to p_t$ in $L^1(\Omega')$, then $\nabla\phi_{t'}\to \nabla\phi_t$ in $W^{1,\gamma}_{\mathrm{loc}}(\Omega)$ for some $\gamma>1$; Corollary~5.23 in~\cite{villani2009optimal}, which shows that weak convergence $p_{t'}\to p_t$ implies $\nabla\phi_{t'}\to \nabla\phi_t$ in measure; and~\cite{Segers.2022}, which proves uniform convergence $\|\nabla\phi_{t'}-\nabla\phi_t\|_{L^\infty}\to 0$ under additional regularity assumptions.  
Quantitative stability estimates can be found in~\cite{manole2021plugin,delalande2021quantitative}.  
However, stability results establish only \emph{continuity} of the optimal transport map under perturbations and do not address finer \emph{differentiability} properties. 
In contrast, identifying the time derivative of the transport map is central to many of the aforementioned applications.

Overall, there are few results concerning the time regularity of optimal transport maps when the target measures $\{P_t\}_{t\in I}$ have time-varying supports $\{\Omega_t\}_{t\in I}$.

\subsubsection*{Results and Methodology}
In this work, we show that if the reference measure $Q$ is sufficiently regular in space, and the curve of probability measures $\{P_t\}_{t \in I}$ is sufficiently regular in space and evolves smoothly in time (in the sense of \cref{Definition:smoothcourve}), then the corresponding curve of optimal transport maps $\{\nabla \phi_t\}_{t \in I}$ also evolves smoothly in time. Moreover, its time derivative solves the \emph{linearized Monge--Ampère equation}, which is a second-order elliptic partial differential equation with strictly oblique boundary conditions and a null zero-order term (see \cref{Theorem:MainWithTime}).

Our approach follows the strategy of linearizing the Monge--Ampère equation (see \cite{figalli2017monge,Villani2003}) and applying the implicit function theorem. To handle the time-varying supports, we make essential use of the tool introduced in~\cite{urbas1997second}, which characterizes the evolution of the supports of the target measures via \emph{convex defining functions}.

The proof of our main result relies on analyzing the following functional derived from the Monge--Ampère equation:
\[
\Gamma(t,\phi)
=
\begin{pmatrix}
\Gamma_t^{(1)}(\phi)\\[4pt]
\Gamma_t^{(2)}(\phi)
\end{pmatrix}
=
\begin{pmatrix}
\log(\det(D^2 \phi)) - \log(q) + \log(p_t(\nabla \phi))\\[4pt]
h_t(\nabla \phi)
\end{pmatrix},
\]
where $h_t$ is a uniformly convex defining function for ${\rm supp}(P_t)$. 
Analyzing the invertibility of its Fr\'echet derivative leads to the study of the following second-order elliptic PDE:
\begin{align*}
    {\rm tr}\!\big([D^2 \phi_t]^{-1} D^2 \xi \big)
    + \frac{\langle \nabla p_t(\nabla \phi_t), \nabla \xi \rangle}{p_t(\nabla \phi_t)}
    &= f \quad \text{in } \Omega,\\
    \langle \nabla h_t(\nabla \phi_t), \nabla \xi \rangle
    &= g \quad \text{on } \partial \Omega.
\end{align*}
In \cref{thm: existence and uniqueness linearized MA}, we show that this PDE is solvable between the spaces
\[
\CXt
=
\left\{
\xi \in \CC^{2,\alpha}(\overline{\Omega}) : 
\int_{\partial \Omega_t} p_t \, \xi(\nabla \phi_t^*) = 0
\right\}
\]
and
\[
\CYt
=
\left\{
(f,g) \in \CC^{0,\alpha}(\overline{\Omega}) \times \CC^{1,\alpha}(\partial \Omega) :
\int_{\Omega} q f = \int_{\partial \Omega_t} p_t \, g(\nabla \phi_t^*)
\right\}.
\]
The proof then proceeds by applying the implicit function theorem to a re-centered version of $\Gamma$, since the range of $\Gamma$ generally exceeds the space $\CYt$.

Building on our theoretical results, we present two statistical applications. 
The first concerns the transport-based regression problem (\cref{sec:transport-based-regression}), where the conditional distribution $P_{Y \mid X = x}$ plays the role of the time-varying target measure. We show that the associated transport-based quantile regressor varies smoothly with respect to the covariate $x$.  
Our second application (\cref{sec:clt}) establishes a central limit theorem for the smoothed optimal transport map, whose limiting distribution is again characterized through the linearized Monge--Ampère equation.

\subsubsection*{Organization}

The remainder of the paper is organized as follows: \Cref{Sec:Notation} introduces and recalls key notations used throughout for the reader's convenience. In \Cref{sec:MainResult}, we state the main result and its assumptions. \Cref{sec:application} applies our theoretical results to various statistical problems.  \Cref{Section:Proof of main} is devoted to the proof of the main result.

\section{Notation}\label{Sec:Notation}

For any Borel probability measure $P$ over $\R^d$  (i.e., $P\in \CP(\R^d)$), let $\support(P)$ denote its topological support, which is defined to be $\support(P) := \R^d \setminus \bigcup\{U\subseteq \R^d: U\text{ is open }, P(U)=0\}.$
 The support of a function $f:\R^d \to \R$ is defined to be $\support(f):= \overline{\{x \in \R^d:f(x)\neq 0\}}$, where the overline the notes the topological closure. The interior of a set $A$ in a topological space $B$ is denoted as ${\rm int}(A)$.   We call a measure absolutely continuous if it is absolutely continuous with respect to the Lebesgue measure $\ell_d$. The same applies with densities, unless the contrary is stated, a density of a probability measure $P$ is the Radon–Nikodym derivative of $P$ w.r.t.\ $\ell_d$. The integration, of a function $f$ is always w.r.t. $\ell_d$ (or the corresponding Hausdorff measure $\mathcal{H}^{k}$ if a $k$-dimensional surface $S$ were involved)  and we simply write
 $ \int f=\int f {\rm d}\ell_d = \int f(x) {\rm d}x$ (or $\int_{S} f=\int_S f {\rm d}\mathcal{H}^k   $ for the surface case).  The indicator function of a Borel measurable set $A$ is denoted as ${\bf 1}_A$.

 Let $U\subseteq \R^d$ be bounded, $k\in \N, \alpha \in [0,1]$, we denote by $\CC^{k,\alpha}(U)$ the space consisting of functions whose $k$-th order partial derivatives are uniformly H\"older continuous with exponent $\alpha$. We adopt the convention that $\CC^{k,0}(U) = \CC^k(U)$. Let $f\in \CC^{1}(U)$, $e\in \R^d$, we denote by $\partial_{e} u$ the directional derivative along the direction $e$ and denote by $\nabla f := (\partial_{e_1}f,\dots, \partial_{e_d}f)^\T$, where $\{e_j\}_{j=1}^d$ denotes the canonical basis of $\R^d$. Hessian matrix of a function $f\in \CC^2(U)$ is denoted as $D^2f $. 
 %When $d=1$, we also write $f':=\nabla f$. 
 Let $F:\R^d \to \R^d$, we denote by $\diver{F}$ the divergence of $F$. Denote by $\R_+^d$ the half-space $\{x=(x_1,\dots, x_d)\in \R^d: x_d>0\}$.   We call {\it domain} every open, bounded, connected, and non-empty subset $\Omega$ of $\mathbb{R}^d$. Moreover, $\Omega$ is said to be a $\CC^{k,\alpha}$ domain $(k \in \N, \alpha \in [0,1])$ if for every $p \in \partial \Omega$, the boundary of $\Omega$, there exists a neighborhood $B=B(x_0)$ of $x_0$ in $\mathbb{R}^d$ and a diffeomorphism $\Psi : B \rightarrow D:=\Psi(B)\subset \R^d$ such that
 \begin{equation*}
     \label{eq:domain}
     \text{(i) $\Psi(B\cap \Omega)\subset \R_+^d$; \quad (ii) $\Psi(B\cap \partial \Omega) \subset\partial \R_+^d$;
\quad (iii) $\Psi\in \CC^{k,\alpha}(B), \Psi^{-1}\in \CC^{k,\alpha}(D)$}.
 \end{equation*}

As curve $\{b_t\}_{t\in I}$ in a Banach space $(B, \|\cdot\|_B)$ is $\CC^1$ if there exists a continuous function $\partial_t b_t:I\to B$ such that the limit 
\begin{equation}\label{eq:C1}
    \lim_{h\to0}\left\|\frac{1}{h}(b_{t+h} - b_{t}) -   \partial_t b_t\right\|_{B} = 0,\quad \text{for all}\,\, t\in I\,.
\end{equation} 
A mapping $F: \CU \to \CY$, were $(\CX, \|\cdot\|_\CX)$ and $(\CY, \|\cdot\|_\CY)$ are Banach spaces and $\CU\subset \CX$ is open,  is said to be Fr\'echet differentiable at $x\in \CU$ if there exists a bounded linear functional $A(x):\CX\to \CY$ such that 
\[
\lim_{h\to 0}\frac{\norm{F(x+h)-F(x)- A(x)h}_\CY}{\norm{h}_\CX} =0
\]
for every $h\in \CX$. We say that $F$ is Fr\'echet differentiable in $\CU$ if $F$ is Fr\'echet differentiable at every point $x\in\CU$. We say that $F$ is $\CC^1$ in $\CU$ if it is Fr\'echet differentiable in $\CU$ and the mapping $A: \CX \to \CL(\CX,\CY)$, where $\CL(\CX,\CY)$ denotes  the space of bounded linear functional from $\CX$ to $\CY$, is continuous w.r.t.\ the norm topology. We denote by ${\rm tr}(M)$ the trace of a matrix $M$. Given two symmetric matrices $A,B$ with dimension $d\times d$ over $\R$, we say that $A\leq B$ if $B-A$ is positive semi-definite. For quantities $a,b$, we write $a \lesssim b$ if there exists a constant $C > 0$ (which may depend on other parameters depending on context) such that $a \leq C b$.

\section{Main Result}\label{sec:MainResult}
In this section, we state the main result and introduce its assumptions. We consider a curve $\{P_t\}_{t\in I}$ of probability measures with both smooth (and uniformly convex) supports and densities. 

As mentioned in the introduction, our main result accommodates time-dependent supports. Due to this dependency, the curve $\{p_t\}_{t\in I}$ is not well-defined in $\CC^{1, \gamma}(\Omega')$ as in the case $P_t = {\rm Unif}[0,1+t]$ for $t\in (0,1)$. To address this issue, we will start assume that $\{p_t\}_{t \in I}$ has a $\CC^{1,\gamma}$ extension.  Later, in \cref{Lemma:extension}, we will provide sufficient conditions for this extension.

To begin with, we define our notion of a \(\CC^1\) curve of probability measures in \(\CC^{1,\alpha}(\overline{\Omega'})\) for a given set \(\Omega'\). {The definition below captures two aspects of the curve of probability measures: changes in support and changes in the density functions.}
\begin{definition}\label{Definition:smoothcourve}
    A curve $\{P_t\}_{t\in I}$ of probability measures over a domain $\Omega'$ is said to be $\CC^1$ in $\CC^{1, \alpha}(\overline{\Omega'})$ if the following conditions are satisfied:
    \begin{enumerate}[label = (\arabic*)]
        \item There exists $\kappa\in (0, +\infty)$ and a $\CC^1$ curve of convex functions $\{h_t\}_{t\in I}$ in $\CC^{2,\alpha}(\overline{\Omega'})$ such that   $$\Omega_t:={\rm int}(\support (P_t)) = \left\{ y \in \mathbb{R}^d : h_t(y) < 0 \right\},$$
        with $ \|\nabla h_t\|=1 $ on $\partial \Omega_t$, 
 $\bigcup_{t\in I}\Omega_t+\frac{1}{\kappa}\mathbb{B}:=\left\{y: {\rm dist}\left(y, \bigcup_{t\in I}\Omega_t \right)<\frac{1}{\kappa}\right\}\subset \Omega'$,
        and 
        $$ \frac{1}{\kappa} \Id \leq D^2 h_t \leq \kappa \Id \quad \text{in } \Omega'.  $$
        \item There exists a $\CC^1$ curve  $\{\log(p_t)\}_{t\in I}$ in $\CC^{1,\alpha}(\overline{\Omega'})$ such that $P_t=p_t {\bf 1}_{\Omega_t}$.  
    \end{enumerate}
\end{definition}

The function $h_t$ is commonly known as the \emph{convex defining function} of $\support(P_t)$. It is noteworthy that $ \Omega_t =\left\{ y \in \mathbb{R}^d : h_P(y) < 0 \right\}$ represents an open, bounded, and uniformly $\CC^{2,\alpha}$ convex domain. Conversely, given a uniformly convex set $\Omega_t$, one can construct the function $h_t$ to be smooth and uniformly convex, approximating $- \text{dist}(\cdot, \partial \Omega_t) + \frac{1}{2} \text{dist}(\cdot, \partial \Omega_t)^2$ near $\partial \Omega_t$. For example, if $\Omega_t = \mathbb{B}(0, 1)=\{x\in \R^d: \|x\|=1\}$, one might choose $h_P(y) := \frac{1}{2} (\|y\|^2 - 1)$ (see, e.g., \cite[pp~40]{figalli2017monge}). 

\begin{theorem}\label{Theorem:MainWithTime}
Let $\Omega$ be a $\CC^2$ uniformly convex domain and let $\Omega'\subset \R^d$ be a domain. If the curve $\{P_t\}_{t\in I}$ is $\CC^1$ in $\CC^{1, \gamma}(\overline{\Omega'})$ with $0<\gamma\leq 1$ (in the sense of \cref{Definition:smoothcourve}) and let $\log(q)\in \CC^{\alpha}(\overline{\Omega})$ with $0< \alpha<\gamma$, then the curve $ \{\nabla \phi_t\}_{t\in I}\subset  \CC^{1,\alpha}(\overline{\Omega})$, where $\phi_t$ solves
\eqref{eq:MongeAmpere}, is also $\CC^1$. Moreover, the time derivative of $I \ni t\mapsto \phi_t \in \CC^{2,\alpha}(\overline{\Omega})$, denoted as $\partial_t \phi_t$, is the unique solution (up to an additive shift) of the linearized Monge--Amp\`ere equation 
\begin{equation}\label{LinearizedMA}
\begin{aligned}
       {\rm tr}([D^2\phi_{t}]^{-1} D^2\xi ) +\frac{ \langle \nabla p_{t}(\nabla \phi_{t}), \nabla \xi\rangle  }{p_{t}(\nabla \phi_{t})} & = -\frac{\partial_t p_t(\nabla \phi_t)}{p_t(\nabla \phi_t)}   \quad \rm in\,\, \Omega,\\
  \langle \nabla h_t(\nabla \phi_{t}), \nabla \xi\rangle&=-\partial_t h_t(\nabla \phi_t) \quad \rm on \,\, \partial \Omega.
\end{aligned}
\end{equation}
\end{theorem}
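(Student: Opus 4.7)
The plan is to deduce the theorem from the Banach-space implicit function theorem applied to the nonlinear functional
\[
\Gamma(t, \phi) = \begin{pmatrix} \log \det D^2 \phi - \log q + \log p_t(\nabla \phi) \\ h_t(\nabla \phi) \end{pmatrix},
\]
viewed on a neighbourhood of $(t_0, \phi_{t_0})$ inside $I \times \CU$, where $\CU$ is the open cone of strongly convex elements of $\CC^{2,\alpha}(\overline{\Omega})$, and whose range lies in $\CC^{0,\alpha}(\overline{\Omega}) \times \CC^{1,\alpha}(\partial\Omega)$. The system $\Gamma(t,\phi) = 0$ is exactly the Monge--Amp\`ere equation \eqref{eq: MongeAmpere} together with the condition $\nabla \phi(\overline{\Omega}) = \overline{\Omega_t}$ recoded through the convex defining function $h_t$ from \cref{Definition:smoothcourve}. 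Caffarelli-type regularity provides the reference solution $\phi_{t_0} \in \CC^{2,\alpha}(\overline{\Omega})$ and guarantees uniform strict convexity, so that all compositions are well defined. Once a local inverse $t \mapsto \phi_t$ of class $\mathcal{C}^1$ is produced, differentiating $\Gamma(t,\phi_t)=0$ in $t$ and applying the chain rule recovers \eqref{LinearizedMA} verbatim.

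The implicit function theorem requires continuous Fréchet differentiability of $\Gamma$ and invertibility of the partial derivative $D_\phi \Gamma(t_0,\phi_{t_0})$. The first input follows the classical computation of \cite{urbas1997second,figalli2017monge} and will be isolated in \cref{LemmaFrechet}; the only subtle point is to verify that the compositions $\log p_t \circ \nabla \phi$ and $h_t \circ \nabla \phi$ remain in the claimed Hölder classes when $\phi$ is perturbed, which is precisely why \cref{Definition:smoothcourve} demands the enlarged neighbourhood $\bigcup_t \Omega_t + \kappa^{-1}\mathbb{B} \subset \Omega'$ on which $\log p_t$ is of class $\CC^{1,\gamma}$.

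The main obstacle is invertibility. A direct calculation shows that $D_\phi \Gamma(t_0,\phi_{t_0})$ is the oblique-derivative operator
\[
\xi \mapsto \bigl({\rm tr}([D^2 \phi_{t_0}]^{-1} D^2 \xi) + \langle \nabla p_{t_0}(\nabla \phi_{t_0}), \nabla \xi \rangle / p_{t_0}(\nabla \phi_{t_0}),\; \langle \nabla h_{t_0}(\nabla \phi_{t_0}), \nabla \xi\rangle\bigr),
\]
which has a one-dimensional kernel (the additive constants) and, by an integration-by-parts identity performed after the change of variables $\nabla \phi_{t_0}^*$, a codimension-one range pinned down by the compatibility condition defining $\CY$. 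Consequently $\Gamma$ itself does not map into $\CY$, and the theorem cannot be applied directly. I will therefore work with the corrected functional $\Gamma^{(t_0)}$ introduced at the end of the excerpt, whose first component is modified so that $\Gamma^{(t_0)}$ lands in $\CY$ (\cref{LemmaFrechetTilde}), and restrict the domain to the hyperplane $\CX$ to kill the constant kernel. Establishing that $D_\phi \Gamma^{(t_0)}(t_0,\phi_{t_0}) \colon \CX \to \CY$ is a bijection is the central analytic step, deferred to \cref{thm: existence and uniqueness linearized MA}; this is the hardest part of the whole argument, requiring continuity and Schauder estimates for second-order linear elliptic equations with oblique boundary conditions and vanishing zero-order term, and is where the smoothness of $\partial\Omega$ and the uniform convexity of $h_{t_0}$ are consumed.

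Granted invertibility, the implicit function theorem yields a unique $\mathcal{C}^1$ curve $s \mapsto \phi_s \in \CX$ with $\Gamma^{(t_0)}(s,\phi_s) = 0$ near $s=t_0$. A short verification shows that any such solution automatically satisfies $\Gamma(s,\phi_s) = 0$: the mass-conservation identity $\int_\Omega q = \int_{\Omega_s} p_s$ and membership in $\CY$ force the two correction terms introduced in $\Gamma^{(t_0)}$ to vanish simultaneously, so $\phi_s$ is the genuine Brenier potential of $P_s$ and $\phi_s = \phi_{t_0}$ when $s = t_0$. Differentiating $\Gamma(s,\phi_s) = 0$ in $s$ at $s=t_0$ and using the $\mathcal{C}^1$ dependence of $p_t$ and $h_t$ on $t$ then yields exactly \eqref{LinearizedMA} with right-hand sides $-\partial_t p_t(\nabla \phi_t)/p_t(\nabla \phi_t)$ and $-\partial_t h_t(\nabla \phi_t)$. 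Uniqueness up to an additive constant is inherited from the one-dimensional kernel identified above, completing the proof.
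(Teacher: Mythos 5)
Your proposal follows essentially the same route as the paper: the same functional $\Gamma$, the same correction $\tilde{\Gamma}^{(t_0)}$ restricted to $\CX$ with range in $\CY$, Fréchet differentiability deferred to \cref{LemmaFrechet}/\cref{LemmaFrechetTilde}, invertibility of the linearized oblique-derivative operator deferred to \cref{thm: existence and uniqueness linearized MA}, and the mass-conservation argument identifying the zero sets of $\tilde{\Gamma}^{(t_0)}$ and $\Gamma$. The one genuine divergence is the end-game: you differentiate $\Gamma(s,\phi_s)=0$ directly, which yields \eqref{LinearizedMA} verbatim by the chain rule, whereas the paper differentiates $\tilde{\Gamma}^{(t)}$, obtains extra constant terms on the right-hand side, and then needs a Reynolds-transport lemma for the moving domains $\Omega_t$ to show they cancel; your shortcut is valid (since $\Gamma$ itself is $\mathcal{C}^1$ near $(t_0,\phi_{t_0})$) and spares that lemma, modulo the implicit identification of the implicit-function-theorem solution with the normalized Brenier potential $\phi_s$, which both you and the paper leave to uniqueness of the optimal map. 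The only omission is the case of $t$ on the boundary of $I$ when $I$ is not open, which the paper treats separately via a uniform contraction argument.
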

\begin{remark}
\begin{enumerate}[label=(\roman*)]
    \item 
    As we will see in \cref{lemma: holder composition stability}, 
requiring the curve $\{P_t\}_{t\in I}$ to be $\CC^1$ in $\CC^{1,\gamma}(\overline{\Omega'})$ (rather than merely in $\CC^{1,\alpha}$) is necessary to ensure the continuity of the map 
$\phi \mapsto \nabla p_t \circ \phi$.  
This level of regularity is also assumed in \cite{urbas1997second}.
    \item
    Recent advances on the Monge--Ampère equation \cite{chen2021global} 
    suggest that the assumptions on $\Omega$ and $\{\Omega_t\}_{t\in I}$ 
    could be relaxed to convexity and $\CC^{1,1}$ boundary regularity.  
    However, to avoid additional technical complications in our arguments, 
    we adopt the classical assumptions used in \cite{caffarelli1996boundary,urbas1997second}, 
    which are sufficient to cover the primary applications we have in~\cref{sec:application}.

    \item
    The positivity of $\alpha$ is crucial for the application of 
    Schauder theory in \cite{gilbarg1983trudinger}, 
    and therefore cannot be relaxed using our current techniques.
\end{enumerate}
\end{remark}

In \cref{Theorem:MainWithTime}, we implicitly assume that each density $p_t$, originally defined only on $\Omega_t$, admits an extension to the fixed domain $\Omega'$.  
The following result provides a sufficient condition ensuring the existence of such an extension, thereby justifying the applicability of \cref{Theorem:MainWithTime}. We present its proof below, while the proof of \cref{Theorem:MainWithTime} is deferred to~\cref{Section:Proof of main}.

\begin{lemma}\label{Lemma:extension}
    Assume that $I$ is open. Let $\{h_t, \Omega_t\}_{t\in I}$ and $\Omega'$ be defined as in \cref{Definition:smoothcourve}, let $h:(t,x) \mapsto h(t,x)= h_t(x)$ be $\CC^{1,\alpha}$ over $I\times \Omega'$, and let $ \log(p): (t, x) \mapsto  \log(p(t, x))=\log(p_t(x))$ be a  $\CC^{1,\alpha}$ function over the set 
    $ \mathcal{D} :=\{ (t, x): \ t\in I,  \ x \in \overline{\Omega}_t \}.$
    Then the curve $\{P_t\}_{t\in I}$ with $P_t=p_t {\bf 1}_{\Omega_t}$ is $\CC^1$ in $\CC^{1, \alpha}(\overline{\Omega'})$ in the sense of \cref{Definition:smoothcourve}.
\end{lemma}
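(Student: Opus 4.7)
The plan is to verify the two conditions of \cref{Definition:smoothcourve} for $\{P_t\}_{t\in I}$. Condition (1), concerning the defining functions $\{h_t\}$, is essentially a consequence of the joint smoothness of $h$ on $I\times\Omega'$ together with the uniform convexity hypothesis (the pointwise bound $\tfrac{1}{\kappa}\mathrm{I}_d\le D^2h_t\le\kappa\mathrm{I}_d$ is built in, and $\|\nabla h_t\|=1$ on $\partial\Omega_t$ is assumed). The uniform inclusion $\bigcup_{t\in I}\Omega_t+\tfrac{1}{\kappa}\mathbb{B}\subset\Omega'$ can be arranged by restricting $I$ to a relatively compact subinterval, which is legitimate because the $\mathcal{C}^1$-curve property is local in $t$. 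So the substantive task is to produce an extension of $\log p_t$ from $\overline{\Omega_t}$ to $\overline{\Omega'}$, for each $t\in I$, such that the curve $\{\log\tilde p_t\}_{t\in I}$ is $\mathcal{C}^1$ in $\mathcal{C}^{1,\alpha}(\overline{\Omega'})$.

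For the extension I would use the defining function $h_t$ to construct a reflection across $\partial\Omega_t$. Because $\|\nabla h_t\|=1$ on $\partial\Omega_t$ and $D^2h_t$ is uniformly positive-definite, an implicit function theorem argument produces, in a tubular neighborhood $U$ of $\bigcup_{t\in I}\partial\Omega_t$ that is uniform in $t$, a jointly smooth family of diffeomorphisms $\Phi_t$ fixing $\partial\Omega_t$ pointwise and sending $U\cap\Omega_t$ onto $U\setminus\overline{\Omega_t}$; concretely one may take $\Phi_t(y)=y-2h_t(y)\nabla h_t(y)+O(h_t(y)^2)$, where the correction is chosen so that $h_t(\Phi_t(y))=-h_t(y)$. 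Define
\[
\log\tilde p_t(y) := \log p_t(y) \text{ on } \overline{\Omega_t}, \qquad \log\tilde p_t(y) := \log p_t(\Phi_t(y)) \text{ on } U\setminus\overline{\Omega_t},
\]
and patch via a jointly smooth cutoff in $t$ to a fixed positive constant outside $U$ to obtain a function on all of $\overline{\Omega'}$. Joint $\mathcal{C}^{1,\alpha}$ regularity of $(t,y)\mapsto\log\tilde p_t(y)$ follows from the joint smoothness of $h$ and $\log p$ on $\mathcal{D}$ together with the composition chain rule.

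The final step is to verify (a) $\tilde p_t\in\mathcal{C}^{1,\alpha}(\overline{\Omega'})$ with $\mathcal{C}^{1,\alpha}$ norm bounded uniformly in $t$, and (b) $t\mapsto\log\tilde p_t$ is $\mathcal{C}^1$ into $\mathcal{C}^{1,\alpha}(\overline{\Omega'})$. Part (a) is the classical fact that reflection across a $\mathcal{C}^{2,\alpha}$ boundary preserves $\mathcal{C}^{1,\alpha}$ regularity, with the operator-norm constant controlled by the $\mathcal{C}^{2,\alpha}$ data of the boundary, which is uniform in $t$ by the hypotheses on $h$. For (b), compute $\partial_t\log\tilde p_t$ explicitly via the chain rule: the resulting expression in $\partial_t\log p$, $\partial_t h_t$, $\nabla h_t$, $\nabla\log p$ and the cutoff derivatives is jointly continuous (indeed $\alpha$-Hölder) in $(t,y)$ by hypothesis, which under standard composition bounds yields continuity of $t\mapsto\partial_t\log\tilde p_t$ from $I$ into $\mathcal{C}^{0,\alpha}(\overline{\Omega'})$.

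The main obstacle is the Banach-space-valued continuity in the full $\mathcal{C}^{1,\alpha}$ norm. Joint $\mathcal{C}^{1,\alpha}$ regularity of $\log p$ in $(t,x)$ on its own only delivers $\alpha$-Hölder continuity of $t\mapsto\log\tilde p_t$ into $\mathcal{C}^{1}(\overline{\Omega'})$, not automatically into $\mathcal{C}^{1,\alpha}(\overline{\Omega'})$. To upgrade, one either works with a slightly weaker exponent $\alpha'<\alpha$, invoking the compact embedding $\mathcal{C}^{1,\alpha}\hookrightarrow\mathcal{C}^{1,\alpha'}$ together with an Arzelà–Ascoli argument over compact subintervals $J\subset I$ to promote pointwise convergence of difference quotients to convergence in $\mathcal{C}^{1,\alpha'}$; or one exploits the reflection-based formula directly to verify that the Hölder seminorm of $\log\tilde p_{t+h}-\log\tilde p_t - h\,\partial_t\log\tilde p_t$ is $o(h)$, using the joint-Hölder control of all ingredients. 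Either route converts the joint regularity hypothesis into the norm-differentiability required by \cref{Definition:smoothcourve}, completing the proof.
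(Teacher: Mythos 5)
Your proposal is essentially correct in outline but follows a genuinely different route from the paper. The paper does not build a reflection by hand: it views the space--time set $\mathcal{D}\subset\mathbb{R}^{d+1}$ as a $\mathcal{C}^{1,\alpha}$ domain and extends $\log p$ \emph{jointly} in $(t,x)$ by a single application of the extension lemma \cite[Lemma~6.37]{gilbarg1983trudinger}; the only real work is checking that the boundary chart $\Psi(t,x)=(t,x_1,\dots,x_{d-1},-h_t(x))$ is a local $\mathcal{C}^{1,\alpha}$ diffeomorphism near each point of $\partial\mathcal{D}$, where injectivity is obtained from the uniform convexity of $h_t$ (monotonicity of $z\mapsto \partial_{e_d}h_t(x_1,\dots,x_{d-1},z)$ plus the fundamental theorem of calculus). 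Your construction instead produces, for each $t$, a reflection $\Phi_t$ across $\partial\Omega_t$ defined by $h_t(\Phi_t(y))=-h_t(y)$ and patches with a cutoff; this is workable (the time-regularity of $\Phi_t$ you need is available because $\{h_t\}$ is a $\mathcal{C}^1$ curve in $\mathcal{C}^{2,\alpha}$ by \cref{Definition:smoothcourve}), but it forces you to re-derive, via an implicit-function argument and chain-rule bookkeeping, exactly the joint regularity that the paper gets for free from the citation, and those steps are only sketched in your write-up. What your route buys is explicitness and an honest confrontation with the subtle point that joint $\mathcal{C}^{1,\alpha}$ regularity of the extension does not by itself give norm-differentiability of $t\mapsto\log \tilde p_t$ into $\mathcal{C}^{1,\alpha}(\overline{\Omega'})$ — a point the paper's proof passes over silently. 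Be aware, however, that your first remedy (compact embedding into $\mathcal{C}^{1,\alpha'}$, $\alpha'<\alpha$) proves a strictly weaker statement than \cref{Lemma:extension} as written; it is harmless for the downstream use in \cref{Theorem:MainWithTime}, where one only needs some exponent strictly above $\alpha$, but if you take that route you should say explicitly that the exponent degrades, or else carry out the direct $o(h)$ estimate you allude to in your second remedy.
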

\begin{proof}
    We want to extend $\log p$ from ${\rm int}(\CD):={\rm int}\left( \{(t, x): \,t\in I,  \ x \in \Omega_t\}\right)$ to $I\times \Omega'$ using \cite[Lemma~6.37]{gilbarg1983trudinger}. Since $I$ is open and connected, we may assume without loss of generality that $I= \R$. Then, for every $ (t_0,x_0)\in \partial \CD$, it suffices to find a neighborhood $B(t_0,x_0)$ of $(t_0,x_0)$ and a $\CC^{1,\alpha}$ diffeomorphism $\Psi: B(t_0,x_0)\to D:= \Psi(B(t_0,x_0))\subset \R^{d+1}$ such that $\Psi(B(t_0,x_0)\cap \CD) \subset \R^{d+1}_+$, $\Psi(B(t_0,x_0)\cap \partial \CD) \subset \partial \R^{d+1}_+$, and $\Psi\in \CC^{1,\alpha}(B(t_0,x_0)), \Psi^{-1}\in \CC^{1,\alpha}(D)$. A candidate map is
    \[
    \Psi: \R^{d+1}\ni (t,x_1,\dots,x_{d-1},x_d)\mapsto (t, x_1,\dots, x_{d-1}, -h_t(x)) \in \R^{d+1}.
    \]
   Since $h_t$ is a convex defining function of $\Omega_t$, the first two requirements are easily satisfied, and $\Psi$ is $\CC^{1,\alpha}$. We now show that $\Psi$ is one-to-one in a small neighbourhood of $(t_0,x_0)$. As $\{x\in \Omega: h_t(x)<0\}$ is open and $h_t$ is uniformly convex, one may choose $\delta$ small enough such that $\|\nabla h_t(x)\| \neq 0$ for all $(t,x)\in B:= \mathbb{B}((t_0,x_0),\delta)$. We proceed with the proof by contradiction. Suppose there exists $(t,x) =(t,x_1,\dots, x_d)\neq (s, y)=(s,y_1,\dots,y_d)\in B$ such that $\Psi(t,x)= \Psi(s, y)$, then 
    \[
    t=s,\,\, x_i=y_i,\quad i=1,\dots, d-1,
    \]
    and $h_t(x)= h_t( y)$. Define $\tilde h: z \mapsto h_t(x_z)$, where $x_z:= (x_1,\dots,x_{d-1},z)$, then $\tilde h$ is also $\CC^{1,\alpha}$ uniformly convex. Moreover, as $|\tilde h'(z)| \neq 0$ over $\CZ:=\{z: (t,x_z)\in B\}$, $\tilde h'(z) = \inner{\nabla h_t(x_z)}{e_d}$ is either positive or negative over $\CZ$ by the monotonicity of the gradient of a uniformly convex function.
    Therefore, the fundamental theorem of calculus yields that
    \[
    0=\tilde h(y_d) - \tilde h(x_d) = (y_d-x_d)\int_0^1 \tilde h'(x_d + t(y_d-x_d))\,dt \neq 0,
    \]
    which leads to a contradiction.
\end{proof}

Under the assumption that the supports $\{\Omega_t\}_{t \in I}$ remain constant in $t$, we no longer require that $\{P_t\}_{t \in I}$ forms a $\CC^1$ curve in the sense of \cref{Definition:smoothcourve}; it suffices to be a $\CC^1$ curve in the standard sense. In this case, we recover the classical result of \cite{loeper2005regularity}. It is worth noting that the following theorem is already sufficient to derive asymptotic confidence intervals for the optimal transport map, as in \cite{manole2023central}.
\begin{corollary}\label{Coro:IntroSimple}
{Let $\Omega$ (resp. $\Omega'$) be a $\CC^2$ (resp. $\CC^{2,\gamma}$ with $0<\gamma\leq 1$) uniformly convex domain.} Let $\log(p_t)\in \CC^{1, \gamma}(\overline{\Omega'})$ and let $\log(q)\in \CC^{\alpha}(\overline{\Omega})$ with $0< \alpha<\gamma$ such that
$ 1=\int_{\Omega}q=\int_{\Omega'} p_t=1$ for all $t\in I$.   If the curve $\{\log(p_t)\}_{t\in I}\subset \CC^{1,\gamma}(\overline{\Omega'})$ is $\CC^1$ (in the sense of~\eqref{eq:C1}), then the curve $ \{\nabla \phi_t\}_{t\in I}\subset  \CC^{1,\alpha}(\overline{\Omega})$, where $\phi_t$ solves
\eqref{eq:MongeAmpere} for $\Omega_t=\Omega'$, is also $\CC^1$. Moreover,  the time derivative of $I \ni t\mapsto \phi_t \in \CC^{2,\alpha}(\overline{\Omega})$, denoted as $\partial_t \phi_t$, is the unique solution (up to an additive shift) of the linearized Monge--Amp\`ere equation 
\begin{equation*}
    \begin{aligned}
           {\rm tr}([D^2\phi_{t}]^{-1} D^2\xi ) +\frac{ \langle \nabla p_{t}(\nabla \phi_{t}), \nabla \xi\rangle  }{p_{t}(\nabla \phi_{t})} & = -\frac{\partial_t p_t(\nabla \phi_t)}{p_t(\nabla \phi_t)}\quad \rm in\,\, \Omega\,,\\
  \langle\nu_{\Omega'}(\nabla \phi_{t}), \nabla \xi\rangle&=0 \quad \rm on \,\, \partial \Omega\,,
    \end{aligned}
\end{equation*}
where $\nu_{\Omega'}$ denotes the unit outer normal vector-field to $\partial \Omega'$. 
\end{corollary}

\Cref{Coro:IntroSimple} follows immediately from \cref{Theorem:MainWithTime} once we show that the family $\{\log(p_t)\}_{t \in I} \subset \CC^{1,\gamma}(\overline{\Omega'})$ can be extended to a larger set $\Omega'_\kappa := \Omega' + \frac{1}{\kappa}\mathbb{B}$ for some $\kappa$ depending on the uniform convexity of $\Omega'$ (see \cref{lemma:invariant-support} below).  This extension is crucial for computing the Fr\'echet derivative of the target functional (see~\cref{LemmaFrechet}). In contrast with \cref{Lemma:extension}, the fact that the support does not depend on $t$ eliminates the main challenge to our analysis. As a consequence, we no longer need to impose any extra regularity on the curve $t\mapsto \log(p_t)$.
\begin{lemma}\label{lemma:invariant-support}
    Under the assumptions of \cref{Coro:IntroSimple}, the curve $\{P_t\}_{t\in I}$, with $P_t=p_t {\bf 1}_{\Omega'}$, is $\CC^1$ in $\CC^{1, \gamma}(\overline{\Omega'})$ (in the sense of \cref{Definition:smoothcourve}).
\end{lemma}
\begin{proof}
{Fix $t\in I$. Since $\Omega' $ is $\CC^{2,\gamma}$, for each  $x\in \partial \Omega'$, there exists a ball $B(x)=\mathbb{B}(x, \epsilon_x)$ and a $\CC^{2,\gamma}$ diffeomorphism $\Psi=\Psi_{
x
 }: B(x) \to D_x:=\Psi_x(B(x))\subset \R^d$ that straightens the boundary (see \eqref{eq:domain}).} Since  $\partial \Omega'\subset  \bigcup_{x\in \partial \Omega' }\mathbb{B}\left(x, \frac{\epsilon_x}{2}\right)$ and $\partial \Omega'$ is compact, there exists a finite covering $\partial \Omega' \subset \bigcup_{i=1 }^n \mathbb{B}\left(x_i, \frac{\epsilon_{x_i}}{2}\right)$,  with $x_i\in \partial \Omega'$. Let $\kappa>0$ be such that $\Omega'\subset \Omega_{\kappa}'\subset \bigcup_{i=1 }^n \mathbb{B}\left(x_i, \frac{\epsilon_{x_i}}{2}\right)\cup \Omega'  $.

 Denote by $u_t:= \log p_t$ for ease of notation. Set $i\in \{1, \dots, n\}$ and  $$D_+^{(i)}:=\Psi(\Omega'\cap B(x_i))= D_{x_i}\cap \R_+^d.$$ Setting $\tilde u_t^{(i)}(y):= u\circ \Psi^{-1}_{x_i}(y)$ for every $y=(y_1,\dots, y_d) =: (y',y_d)\in D_+^{(i)}$, the extension of $\tilde u_t$ to $ D_{x_i}$ is given by
  \[
  \tilde w_t^{(i)} (y',y_d) := -3 \tilde u_t^{(i)}(y',-y_d) + 4\tilde u_t^{(i)}(y',-y_d/2), \quad y_d<0, 
  \]
  so that   $\tilde w_t^{(i)} \in \CC^{1,\gamma}(D_{x_i})$ (see \cite[Lemma~6.37]{gilbarg1983trudinger}) and 
  $$ w_t^{(i)}:=\begin{cases}
      \tilde w_t^{(i)}\circ \Psi_{x_i} &{\rm if}\ x\notin \Omega' \\
       u_t &{\rm if}\ x\in \Omega'\\
  \end{cases}  $$
is a $\CC^{1, \gamma}$ extension of $u_t$ in $ B(x_i)\cup \Omega'$.   Let $\{\eta_i\}_{i=1}^n$ be a locally finite partition of unity subordinate to the covering $\{B(x_i)\}_{i=1}^n$.  Then we have 
$$ w_t=\sum_{i=1}^n \eta_i w_t^{(i)}\in \CC^{1, \gamma}(\overline{\Omega_{\kappa}'}) $$
and $w_t=u_t$ in $\overline{\Omega'}$
for all $t\in I$. Since $u_t = \log(p_t)$ is $\CC^1$ in $\CC^{1,\gamma}(\overline{\Omega'})$ in the standard sense, \cref{lemma: holder composition stability} (below) shows that for each $i$, the curve $\{w_t^{(i)}\}_{t\in I}$ is  $\CC^{1}$ in $\CC^{1,\gamma}\left(\overline{\mathbb{B}\left(x_i, \epsilon_{x_i}/2\right)\cup \Omega'}\right)$, which {\it a fortiori
} implies that $\{w_t\}_{t\in I}$ 
 is $\CC^1$ in $\CC^{1,\gamma}(\overline{\Omega'_\kappa})$. This concludes the proof. 
\end{proof}

\section{Applications}\label{sec:application}
\subsection{Transport-based Quantile Regression}\label{sec:transport-based-regression}
Our first application concerns the \emph{transport-based quantile regression} problem~\cite{carlier2016vector,delBarrioQuantile2024}. Let $(X, Y) \in \mathbb{R}^{m+d}$ be two random vectors with joint probability distribution $P_{X,Y}$. The transport-based quantile regressor $(u,x) \mapsto \nabla_u \phi(u,x)=\mathbb{Q}_{Y\mid X}(u,x)$ of random vectors $Y$ on $X$ is defined as the unique Borel function such that, for $P_X$-almost every $x$, the function $u \mapsto \nabla_u \phi(u,x)$ is the transport-based quantile function of $P_{Y \vert X=x}$---the conditional probability measure of $Y$ given $X=x$ ---which we assume to have  density $p_{Y \vert X}(\cdot,x)$~\cite{Chernozhukov2017, HallinBarrio2021, ghosal2022multivariate,Shi2022}. Recall that transport-based quantile function of a probability measure $P$ as the unique gradient of a convex function pushing a fixed reference measure $Q$ (with density $q$ w.r.t.\ the Lebesgue measure) forward to $P$. Therefore, for each $x \in {\rm int}({\rm supp}(P_X))$, $u \mapsto \phi(u,x)$ solves the \textit{conditional Monge--Ampère equation}~\cite{carlier2016vector}:
\begin{align*}
    \det(D^2_u \phi(\cdot, x)) &= \frac{q}{ p_{Y \vert X}(\nabla_u \phi(\cdot,x),x)} \quad {\rm in} \ \Omega, \\
     \nabla \phi (\Omega \times \{x\}) &= {\rm supp} (P_{Y \vert X}(\cdot, x)). 
\end{align*}
In this setting, our goal translates into studying the regularity of the transport-based quantile regressor $(u,x) \mapsto \nabla_u \varphi(u,x)$ with respect to the covariates $x$.

\begin{figure}[t]
    \centering
    \includegraphics[width=0.5\linewidth, trim={1cm 2cm 1cm 2.5cm},clip]{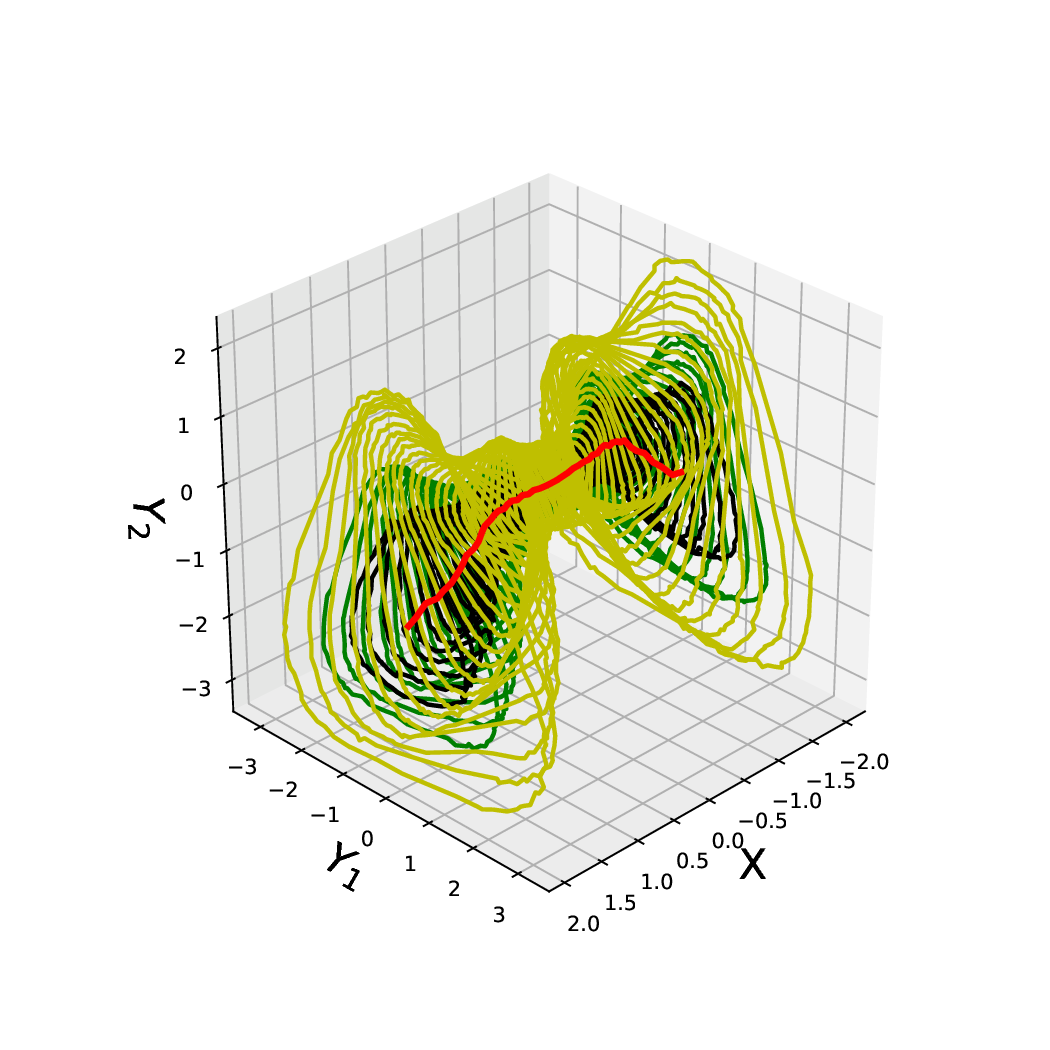}
    \caption{Multivariate quantile regression (two-dimensional variable of interest $\BY=(Y_1,Y_2)^\T$; univariate regressor $X$), showing the conditional medians (red) and the conditional quantile contours of order $\tau = 0.2$ (black), $\tau = 0.4$ (green), $\tau = 0.8$ (yellow). The number of samples generated is $n = 320,050$. The nonparametric estimator is that of~\cite{delBarrioQuantile2024}, trivially adapted to the change of reference measure.}
    \label{fig:quantile}
\end{figure}

\Cref{fig:quantile} illustrates the expressive capabilities of the transport-based quantile regressor in visualizing complex data. It effectively captures nonlinear trends, heteroskedasticity, and the overall shape of the conditional distributions. In the case $m=1$ and $d=2$, we visualize the regression median (red) together with quantile tubes of orders $\tau = 0.2$ (black), $0.4$ (green), and $0.8$ (yellow), defined via the \emph{conditional quantile contours}, i.e., the image of the mapping
$x \mapsto \bigl(x, \mathbb{Q}_{Y \mid X}\bigl(\partial \mathbb{B}(0,\sqrt{\tau}), x\bigr)\bigr)$, 
for the model
\[
\mathbf{Y} =
\begin{pmatrix}
Y_1 \\
Y_2
\end{pmatrix}
=
R(X)
\begin{pmatrix}
\frac{1}{2}(X^2+1)Z_1 \\
-\frac{1}{4}X^2 + \frac{1}{2}(X^2+1)Z_2
\end{pmatrix},
\]
where $X \sim {\rm Unif}[-2,2]$ and
\[
R(X) =
\begin{pmatrix}
\cos\left(\frac{\pi}{2}(2-X)\right) & -\sin\left(\frac{\pi}{2}(2-X)\right) \\
\sin\left(\frac{\pi}{2}(2-X)\right) & \cos\left(\frac{\pi}{2}(2-X)\right)
\end{pmatrix}
\in \R^{2\times 2},
\qquad
\BZ := (Z_1,Z_2)^\top = \tilde{\BZ}\,{\bf 1}_{\{\|\tilde{\BZ}\|\le 100\}}.
\]
The random vector $\tilde{\BZ}$ follows a four-component Gaussian mixture, where each component has equal weight, common covariance matrix $\Sigma = 10\,\Id$, and means $(0.866,-0.5)^\top$, $(-0.866,-0.5)^\top$, $(0,0)^\top$, and $(0,1)^\top$, respectively.  
The reference measure for the quantile regressor in Figure~\ref{fig:quantile} is the uniform distribution on the unit ball.

From \cref{Theorem:MainWithTime} and \cref{Lemma:extension}, we obtain as a corollary the desired regularity of the transport-based quantile regressor, which in particular apply to the toy example above. This smoothness can in turn be used to develop asymptotic theory~\cite{KoenkerBook} and to perform causal and counterfactual inference~(see, e.g., \cite{ImbensNewey2009, Chernozhukov2013}), as in the classical one-dimensional case.

\begin{corollary}
Let $\Omega$, $\Omega'$  and the reference measure $Q={\bf 1}_{\Omega} q dx$   be as in \cref{Theorem:MainWithTime}. 
Let $(X, Y) \in \mathbb{R}^{m+d}$ be a pair of random variables with probability law $P_{X, Y}=P_{Y\vert X} P_X$. Assume that 
\begin{enumerate}[label = (\arabic*)]
      \item for each  $x\in {\rm int}({\rm supp}(P_X))$, the conditional  probability measure has support within $\Omega'$ and the log density $(x,y) \mapsto \log p_{Y\vert X}(y, x)$  is $\CC^{1,\gamma}$ over $\overline{{\rm supp}(P_{X,Y})}$ and 
    \item the convex defining function $h(\cdot, x)$ of ${\rm supp}(P_{Y\mid X}(\cdot, x)))$  is $\CC^{1,\gamma}$ over $\Omega' \times {\rm supp}(P_X)$.
\end{enumerate}
Then  the function $ {\rm int}({\rm supp}(P_x)) \ni x \mapsto  \mathbb{Q}_{Y \vert X}(\cdot , x) \in \CC^{1,\alpha}(\overline{\Omega})$ is $\CC^1$. In particular,  $\nabla_x \mathbb{Q}_{Y \vert X}\in \CC(\overline{\Omega} \times {\rm int}({\rm supp}(P_X)))$. 
\end{corollary}

\subsection{Central Limit Theorem for Smooth Optimal Transport Map}\label{sec:clt}
The smooth optimal transport problem is introduced as a way to alleviate the curse of dimensionality in estimating the Wasserstein distance~\cite{Goldfeld2020, goldfeld2020gaussian, goldfeld2020asymptotic, nietert2021smooth, GoldfeldKato2024}. 
Let $\Omega$ be a $\CC^{2,1}$ uniformly convex domain. 
Let $K : \overline{\Omega}\times\overline{\Omega} \to (0,\infty)$ be a $\CC^{2,1}$ kernel satisfying 
$\int_{\Omega} K(x,y)\,{\rm d}y = 1$ and $\int_{\Omega} K(x,y)\,{\rm d}x = 1$ for all $(x,y)\in \Omega\times\Omega$.  
Let the reference measure $Q \in \CP(\Omega)$ satisfy $\log(q) \in \CC^{\alpha}(\overline{\Omega})$ for some $0<\alpha<1$.  
The smooth (or convoluted) optimal transport problem between $P \in \CP(\Omega)$ and $Q$ is defined as
\begin{equation}
\label{eq:Smooth-OT}
    \min_{\pi \in \Pi(P_K, Q)} \int \|x-y\|^2\,d\pi(x,y),
\end{equation}
where $P_K$ admits the smoothed density $\int K(\cdot,x)\,d P(x)$. 

Suppose we observe i.i.d.\ samples $X_1, \dots, X_n$ from $P$ and define the empirical measure  
$\widehat{P}_n := \frac{1}{n} \sum_{i=1}^n \delta_{X_i}$.  
Let $T = \nabla\phi$ be the optimal transport map from $Q$ to $P_K$, and let $\widehat{T}_n$ denote the optimal transport map from $Q$ to $\widehat{P}_{n,K}$.  
In this section, we establish a central limit theorem for the smooth optimal transport maps $\widehat{T}_n$ (see~\cref{prop:clt}).

As expected, the result follows from the functional delta method.  
To apply it, one needs the Hadamard differentiability of the mapping $\Phi$ that sends a density $p$ to the optimal transport map $\nabla\phi_{q\to p}$ pushing $q$ forward to $p$.  
This differentiability follows from our general result \cref{Coro:IntroSimple}, together with the lemma below.  
We remark that this lemma provides a general principle showing that smoothness along every $\CC^1$ curve implies Hadamard differentiability, which may be of independent interest to the reader.

\begin{lemma}\label{lemma:curve-hadamard}
   Let $(\CX,\|\cdot\|_\CX)$, $(\CY, \|\cdot\|_\CY)$ be normed spaces. Assume that $\mathcal{K}\subset \CX$ is open.   Let $\Phi: \mathcal{K}\to \CY$ be a functional. Suppose that for any $\CC^1$ curve $\eta:[0,1] \mapsto \mathcal{K}$ the function $\Phi\circ \eta:[0,1] \to \CY$ is also $\CC^1$. Then $\Phi$ is Hadamard differentiable at any $x_0\in \mathcal{K}$. That is, for any  $t_k\to 0^+$ and $h_k\to h\in \CX$ with $\{h_k\}_{k\geq 0}\subset \CX$, it follows that  
   $$ \lim_{k\to \infty}\frac{\Phi(x_0+ t_k h_k)-\Phi(x_0)}{t_k} =  D\Phi(x_0)(h)=\frac{d}{dt}\bigg\vert_{t=0} \Phi(x_0+ h t ). $$
\end{lemma}
\begin{proof}
    For any $h \in \mathcal{X}$, the curve $x_t = x_0 + th $ is $\CC^1$, for $t$ small enough, then there exists a mapping $D\Phi(x_0):\mathcal{X}\to \CY$ such that
    \[
    \lim_{t\to 0^+}\left\|\frac{\Phi(x_{t}) - \Phi(x_0)}{t} - D\Phi(x_0)(h)\right\|_{\CY} = 0\,.
    \]
    That is, $\Phi$ is Gateaux differentiable at $x_0$. Note that $\Phi$ is positive homogeneous as for $\lambda>0$, 
    \begin{equation*}\label{eq:Homogeneous}
        D\Phi(x)(\lambda h)=    \lim_{t\to 0^+}\frac{\Phi(x_0 + t\lambda h) - \Phi(x_0)}{t}=     \lim_{s\to 0^+} \frac{\Phi(x_0 + s h) - \Phi(x_0)}{s/\lambda} =      \lambda D\Phi(x)(h)\,.
    \end{equation*}
We proceed by contradiction. Suppose that 
    \[
    \lim_{k}\left\|\frac{\Phi(x_0 + t_k h_k) - \Phi(x_0)}{t_k} - D\Phi(x_0)(h)\right\|_{\CY}=\alpha\in (0,\infty]\,. 
    \]
    for some sequence $t_k\to 0^+$ and $h_k\to h\in \CX$ with the convention that $\alpha=\infty$ if the sequence $\left\{\|(\Phi(x_0 + t_k h_k) - \Phi(x_0))/t_k\|_{\CY}\right\}_{k\geq 1}$ diverges. The above condition shows that 
    \begin{equation}
        \label{eq:to-be-contracitied-Hadamard}
        \lim_{k}\left\|\frac{\Phi(x_0 + t_k h_k) - \Phi(x_0 + t_k h)}{t_k}\right\|_{\CY}>0\,.
    \end{equation}
    Define $\eta_k: s\mapsto x_0 + t_k (h + s(h_k-h) ) \in \CK$ for $k$ large enough, then $\Phi\circ \eta_k$ is $\CC^1$ in $\CY$. Hence, the mean-value theorem (see e.g.,~\cite[Theorem~4.2.]{Lang.functional.93}) and the positive homogeneity of $D\Phi$ give that 
    \begin{equation}\label{eq:mean-value}
    \begin{aligned}
                 \left\|\frac{\Phi(x_0 + t_k h_k) - \Phi(x_0 + t_k h)}{t_k} \right\|_\CY &\leq  \sup_{s\in [0,1]}\|D\Phi(\eta_k(s)) \left(h_k-h\right)\|_\CY\\
     & = \|h_k-h\|_\CX \sup_{s\in [0,1]}\left\|D\Phi(\eta_k(s)) \left(\frac{h_k-h}{\|h_k-h\|_\CX}\right)\right\|_\CY\,.
    \end{aligned}
    \end{equation}
 Write $v_k=\frac{h_k-h}{\|h_k-h\|_\CX}$. If $\{\|D\Phi(\eta_k(s))(v_k)\|_{\CY}: k\in \N,\, s\in [0,1]\}$ is bounded, then~\eqref{eq:mean-value} yields that
$$ \left\| \frac{\Phi(x_0 + t_k h_k) - \Phi(x_0 + t_k h)}{t_k}\right\|_\CX \lesssim \|h_k-h\|_\CX\to 0, $$
contradicting \eqref{eq:to-be-contracitied-Hadamard}. 
Therefore, as $h_k\to h$ and $t_k\to 0^+$, after taking subsequences, we can choose a sequence $\{x_n\}_{n\geq 1}\subset\CX$ such that $\|x_n-x_0\|_{\CX}\leq 2^{-n}$  and $\|D\Phi(x_n)(v_n)\|_{\CY}\geq 2^{n^2}$.  Applying \cite[Corollary~2.10]{Kriegl1997} shows that there exist a $\mathcal{C}^\infty$ curve $\eta:[-1,1]\to \CX$ and two sequences of positive numbers $\{s_n\}_{n\geq 1}\subset (0,\infty)$ and $\{s_n'\}_{n\geq 1}\subset (0,\infty)$ with $s_n,s_n'\to 0$ and such that $\eta(s_n +t)= x_n + t  \frac{v_n}{2^n}$  for $t\in (-s_n',s_n')$. Since $\Phi\circ \eta$ is $\CC^1$ by assumption, we derive that
\begin{equation}\label{eq:derivative}
    \frac{d}{dt}\bigg|_{t=0} \Phi(\eta(s_n +t))= \frac{1}{2^n} D\Phi(x_n)(v_n). 
\end{equation}
Moreover, $\left\|\frac{d}{dt}\bigg|_{t=0} \Phi(\eta(s_n +t))\right\|_{\CY}$ is finite for all $n\in \N$. On the other hand, \eqref{eq:derivative} and the fact that $\|D\Phi(x_n)(v_n)\|_{\CY}\geq 2^{n^2}$ imply
\[
\left\| \frac{d}{dt}\bigg|_{t=0} \Phi(\eta( s_n +t))\right\|_\CY= \frac{1}{2^n} \left\|D\Phi(x_n)(v_n)\right\|_\CY > 2^{n^2-n} \to \infty,
\]
which leads to a contradiction.
\end{proof}

As a consequence, we arrive at the following result.
\begin{corollary}\label{coro:hadamard-otmap}
The functional $ \Phi:\CC^{2}(\overline{\Omega}) \to\CC^{1,\alpha}(\overline{\Omega})$, mapping a density $p\in \CC^{2}(\overline{\Omega})$ to its transport map  $\nabla \phi_{q\to p}$ from $q$ to $p$, is Hadamard differentiable at $P_K$ tangentially to   $$\CX=\left\{\text{$f\in \CC^{2}(\overline{\Omega})$ with $\int_\Omega f=0$}\right\}. $$
\end{corollary}
\begin{proof}
 Since $P_K$ is uniformly bounded away from zero,  there exists an open neighborhood $\mathcal{U}\subset \CX$ of zero such that $P+f$ is a probability density. The function $\mathcal{U}\ni f\mapsto \Phi(P_K+f)$ is differentiable along curves in the sense of \cref{lemma:curve-hadamard}. Hence, applying \cref{Coro:IntroSimple} concludes the result.
\end{proof}

 Now, we ready to show the central limit theorem. Recall that for any Banach space $(\CX,\|\cdot\|_\CX)$, we say that a sequence of $\CX$-valued random elements  converges weakly to $X$, write as $X_n \overset{d}{\to} X$, if for any bounded continuous function $f:\CX \to \R$, $\E[f(X_n)] \to \E[f(X)]$.
\begin{proposition}\label{prop:clt}
          It follows that
 \begin{equation}\label{CLT-smooth-OT}
      \sqrt{n}\left( \widehat{T}_n - T \right)\xrightarrow{d}  \nabla \tilde{\mathbb{G}}
 \end{equation}
in $\CC^{1,\alpha}(\overline{\Omega})$, where $ \tilde{\mathbb{G}}$ solves the following equation a.e.,
\begin{align*}
       {\rm tr}([D^2\phi]^{-1} D^2\xi ) +\frac{ \langle \nabla p_{K}(T), \nabla \xi\rangle  }{p_{K}(T)} & = -\frac{\tilde{ \mathbb{G}}}{p_K(T)}   \quad \rm in\,\, \Omega\,,\\
  \langle \nabla h_t(T), \nabla \xi\rangle&=0 \quad \rm on \,\, \partial \Omega\,
\end{align*}
and $\tilde{\mathbb{G}}$ is the centered tight Gaussian process in $\CC^{2}(\overline{\Omega})$ with covariance function 
 $$ \E[\tilde{\mathbb{G}}(x) \tilde{\mathbb{G}}(y)] = {\rm Cov}\left( K(y, X_1)K(x, X_1)\right)\,.$$ 
\end{proposition}
 \begin{proof}
     In view of  $\widehat{P}_{n,K} = \frac{1}{n}\sum_{i=1}^n K(\cdot, X_i) \in \CC^{2,1}(\overline{\Omega})$ and as $\CC^{2,1}(\overline{\Omega})$ norm of $K(\cdot, X_1)$ is bounded by a deterministic constant $C$, the following CLT in $\CC^{2}(\overline{\Omega})$ (see e.g., \cite[Theorem~3.2]{Naresh.CLT.1976}) holds
 \begin{equation}
     \label{CLT-density}
      \sqrt{n}\left( \widehat{P}_{n,K} - {P}_K \right)\xrightarrow{d} \mathbb{G}.
 \end{equation}
 Then the results follows from \eqref{CLT-density}, \cref{coro:hadamard-otmap} and the functional delta-method \cite[Theorem~3.10.4]{VaartWellner2023}.
 \end{proof}

\section{Proof of~\cref{Theorem:MainWithTime}}\label{Section:Proof of main}
In this section, we provide the proof of Theorem~\ref{Theorem:MainWithTime}. We recall that for each $t$, the function  $\phi_t: \Omega \to \R$ is strictly convex and solves the boundary problem \eqref{eq:MongeAmpere}. Following \cite{urbas1997second}, we rewrite \eqref{eq:MongeAmpere} in a form that allows for a linearization:
\begin{equation}\label{eq:MongeAmpereUrbas}
    \begin{aligned}
           \det( D^2 \phi_t) &= \frac{q}{p_t(\nabla \phi_t)} \quad {\rm in}\,\, \Omega,\\
  h_t(\nabla  \phi_t)  & = 0\quad {\rm in}\,\, \partial \Omega. 
    \end{aligned}
\end{equation}
As indicated previously, the strategy is to differentiate~\eqref{eq:MongeAmpereUrbas} and apply the implicit function theorem \cite[Theorem~15.1]{deimling2013nonlinear} over the functional
\begin{equation}\label{eq: Gamma}
\begin{aligned}
    \Gamma: I \times \CC^{2,\alpha}_{>0}(\bar{\Omega}) &\to \CC^{0,\alpha}(\overline{\Omega}) \times \CC^{1,\alpha}(\partial \Omega)  \\
     \left( \begin{array}{c}
         t  \\
         \phi
    \end{array}\right)&\mapsto \left(\begin{array}{c}
         \Gamma_t^{(1)}(\phi) \\
         \Gamma_t^{(2)}(\phi) \end{array}\right):=\left(\begin{array}{c} 
        \log(\det(D^2\phi))-\log(q)+\log(p_t(\nabla \phi))   \\
          h_t(\nabla \phi)
    \end{array}\right), 
    \end{aligned}
\end{equation}
where for $\lambda \geq 0$, 
$$ \CC^{k,\alpha}_{>\lambda}(\bar{\Omega}):=\{ f\in \CC^{k,\alpha}(\overline{\Omega}): D^2 f> \lambda I\}.$$
We observe that the terms \(\log(p_t(\nabla \phi))\) and \(h_t(\nabla \phi)\) appear in the definition of \(\Gamma\). Therefore, we must ensure that the composition of H\"older continuous functions with the same exponents remains H\"older continuous with exponent unchanged. Unfortunately, such a claim is untrue in general. For example, if $f,g\in \CC^{0,\alpha}(\overline{\Omega})$, then $f\circ g\in \CC^{0,\alpha^2}(\overline{\Omega})$ rather than being $\CC^{0,\alpha}(\overline{\Omega})$. However, the claim is valid when either $f$ or $g$ has higher regularity as in the following lemma, which is an immediate result from \cite[Theorem~4.3] {de1999regularity}.

 \begin{lemma}[Compositions of H\"older Functions]\label{lemma: holder composition}
For any $\alpha,\beta\in[0,1]$ and integers $l\geq k\geq 0$ with $l\geq 1$, there exists a constant $C=C(k,l,\alpha,\beta,\Omega,\Omega')>0$ such that for all $f\in\CC^{k,\alpha}(\Omega')$ and $g\in\CC^{l+1,\beta}(\overline{\Omega})$ satisfying $\nabla g(\overline{\Omega})\subset\Omega'$, we have $f\circ\nabla g\in\CC^{k,\min(\alpha,\beta)}(\overline{\Omega})$ and
\[
\|f\circ\nabla g\|_{\CC^{k,\min(\alpha,\beta)}(\overline{\Omega})}
\le
C\bigl(1+\|g\|_{\CC^{l+1,\beta}(\overline{\Omega})}^{k+\alpha}\bigr)
\|f\|_{\CC^{k,\alpha}(\Omega')}.
\]

\end{lemma}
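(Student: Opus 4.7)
The statement is essentially a direct consequence of the classical multivariable composition estimate for H\"older spaces, as encoded in \cite[Theorem~4.3]{de1999regularity}. The plan is therefore to verify that our hypotheses match those of that theorem and read off the stated bound. Since $g \in \CC^{l+1,\beta}(\overline{\Omega})$ with $l \geq 1$, the gradient $\nabla g \in \CC^{l,\beta}(\overline{\Omega})$ is, in particular, Lipschitz continuous on $\overline{\Omega}$ with Lipschitz constant controlled by $\|g\|_{\CC^{l+1,\beta}(\overline{\Omega})}$; the inclusion $\nabla g(\overline{\Omega})\subset \Omega'$ together with $f \in \CC^{k,\alpha}(\Omega')$ makes the composition $f\circ \nabla g$ well-defined and continuous on $\overline{\Omega}$.

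For the quantitative estimate, I would expand the partial derivatives of $f\circ \nabla g$ of order up to $k$ via Fa\`a di Bruno's formula as polynomial expressions in terms of the form $(\partial^{\mu} f)\circ \nabla g$ with $|\mu|\leq k$ multiplied by partial derivatives of $\nabla g$ of order $\leq k \leq l$. Since $l\geq k$, every such factor is bounded on $\overline{\Omega}$, so the $k$-th order derivatives of $f\circ \nabla g$ exist and are bounded. The H\"older seminorm of the top-order terms then splits into two kinds of contributions: factors of the form $(\partial^{\mu} f)(\nabla g(\cdot))$ whose $\alpha$-H\"older seminorm is dominated, via
\[
|(\partial^\mu f)(\nabla g(x)) - (\partial^\mu f)(\nabla g(y))| \leq [\partial^\mu f]_{0,\alpha}\,\|\nabla g\|_{\mathrm{Lip}}^{\alpha}\,|x-y|^{\alpha},
\]
by the product $[\partial^\mu f]_{0,\alpha}\|\nabla g\|_{\mathrm{Lip}}^{\alpha}$ (which is where the $\|g\|_{\CC^{l+1,\beta}(\overline{\Omega})}^{\alpha}$ factor in the final estimate appears); and factors coming from partial derivatives of $\nabla g$ of order at most $k\leq l$, which are $\beta$-H\"older on $\overline{\Omega}$ by assumption on $g$. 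Taking the smaller exponent $\min(\alpha,\beta)$, absorbing lower-order seminorms using boundedness of $\Omega$ (this is where the $\mathrm{diam}(\Omega)$ dependence of the constant enters), and adding the sup norm contributions yields exactly the claimed inequality.

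The only delicate aspect of the argument is the Fa\`a di Bruno bookkeeping --- ensuring that the products of mixed partials of $\nabla g$ and the iterated derivatives of $f$ combine to give the stated $\min(\alpha,\beta)$ exponent and the exact power $\alpha$ of $\|g\|_{\CC^{l+1,\beta}(\overline{\Omega})}$ in the constant --- but this is standard and handled once and for all in \cite[Theorem~4.3]{de1999regularity}. No new idea is required beyond matching the hypotheses: I would simply record the identification of our $f$ and $\nabla g$ with the corresponding objects in that reference and invoke the theorem.
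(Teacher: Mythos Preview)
Your proposal is correct and matches the paper's approach exactly: the paper does not give its own proof but simply states that the lemma ``is an immediate result from \cite[Theorem~4.3]{de1999regularity}''. Your additional sketch of the Fa\`a di Bruno bookkeeping and the matching of hypotheses is more detailed than what the paper provides, but the core step---invoking that reference---is identical.
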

As a direct consequence of Lemma~\ref{lemma: holder composition}, we establish that $\Gamma$ is well-defined. We conclude this subsection by presenting a set of properties related to H\"older spaces, which will be instrumental in the subsequent subsection for proving that the Fr\'echet derivative of $\Gamma$ with respect to $\phi$ is well-defined. The following lemma, adapted from \cite[Proposition~6.1,6.2]{de1999regularity}, states the stability of H\"older functions under composition, and justifies the condition in \cref{Theorem:MainWithTime}, where we assume that $p_t$ is $\CC^{1,\gamma}$ rather than merely $\CC^{1,\alpha}$.

\begin{lemma}\label{lemma: holder composition stability}
    Under the same setting as in \cref{lemma: holder composition}, the map $$\CC^{k,\alpha}(\Omega')\ni f\mapsto f\circ \nabla g \in \CC^{k,\min{(\alpha,\beta)}}(\overline{\Omega})$$ is linear and continuous. In addition, if $\alpha>\beta$, then the map $$\CC^{l+1,\beta}(
\overline{\Omega})\ni g\mapsto f\circ \nabla g \in \CC^{k,\beta}(\overline{\Omega})$$ is also continuous. More precisely, there exists $\delta,\rho, M>0$ such that if $\norm{g'-g}_{\CC^{l+1,\beta}(\overline{\Omega})}<\delta$, one has that
    \[
\|f\circ \nabla g - f\circ \nabla g'\|_{\CC^{k,\beta}(\overline{\Omega})} \leq M \|f\|_{\CC^{k,\alpha}(\Omega')} \| g -g'\|_{\CC^{l+1,\beta}(\overline{\Omega})}^\rho.
\]
\end{lemma}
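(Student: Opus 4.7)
The first claim will be immediate from \cref{lemma: holder composition}: the map $f\mapsto f\circ\nabla g$ is manifestly linear, and for each fixed $g$ the quantitative bound
\[
\|f\circ\nabla g\|_{\CC^{k,\min(\alpha,\beta)}(\overline{\Omega})}\le C\bigl(1+\|g\|_{\CC^{l+1,\beta}(\overline{\Omega})}^{\alpha}\bigr)\|f\|_{\CC^{k,\alpha}(\Omega')}
\]
converts linearity into continuity. The substantive content lies in the second assertion, where I assume $\alpha>\beta$ and must extract a quantitative H\"older rate.

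\textbf{Base case $k=0$.} I would first observe that openness of $\Omega'$ together with compactness of $\nabla g(\overline{\Omega})\subset\Omega'$ yields a threshold $\delta_0>0$ so that $f\circ\nabla g'$ is well-defined whenever $\|g-g'\|_{\CC^{l+1,\beta}}<\delta_0$; on this neighborhood, set $R:=1+\max(\|g\|_{\CC^{l+1,\beta}},\|g'\|_{\CC^{l+1,\beta}})$ and $u:=f\circ\nabla g-f\circ\nabla g'$. Two complementary pointwise estimates then drive the argument. Applying the $\alpha$-H\"older bound for $f$ pointwise gives the sup-norm smallness
\[
\|u\|_{L^\infty(\overline{\Omega})}\le\|f\|_{\CC^{0,\alpha}}\,\|g-g'\|_{\CC^{l+1,\beta}}^{\alpha}.
\]
On the other hand, because $l\ge 1$ forces $\nabla g$ and $\nabla g'$ to be Lipschitz, each of $f\circ\nabla g$ and $f\circ\nabla g'$ inherits the full $\alpha$-H\"older regularity of $f$, so that $[u]_{\CC^{0,\alpha}}\lesssim \|f\|_{\CC^{0,\alpha}}R^{\alpha}$. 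Combining these via the standard interpolation inequality
\[
[u]_{\CC^{0,\beta}}\le (2\|u\|_{L^\infty})^{1-\beta/\alpha}\,[u]_{\CC^{0,\alpha}}^{\beta/\alpha}
\]
yields the target estimate with exponent $\rho=\alpha-\beta\in(0,1)$.

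\textbf{Induction and main obstacle.} For $k\ge 1$ I would induct on $k$ using the chain-rule decomposition
\[
\partial_i u=\sum_j\bigl[(\partial_j f)\circ\nabla g-(\partial_j f)\circ\nabla g'\bigr]\partial_i\partial_j g+\sum_j(\partial_j f)\circ\nabla g'\,\bigl[\partial_i\partial_j g-\partial_i\partial_j g'\bigr].
\]
Each Leibniz summand splits into a $\CC^{k-1,\beta}$-small factor (supplied either by the inductive hypothesis applied to $\partial_j f\in\CC^{k-1,\alpha}$ with exponent $\rho=\alpha-\beta$, or directly by the linear bound $\|\partial_i\partial_j g-\partial_i\partial_j g'\|_{\CC^{k-1,\beta}}\le\|g-g'\|_{\CC^{l+1,\beta}}$, which uses $l\ge k$) times a $\CC^{k-1,\beta}$-bounded factor (controlled by \cref{lemma: holder composition} on a fixed neighborhood of $g$), and the standard product inequality in H\"older spaces closes the induction with the same exponent $\rho=\alpha-\beta$. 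The main obstacle---and the precise reason the strict inequality $\alpha>\beta$ is indispensable---is the interpolation step of the base case: without the positive gap $\alpha-\beta$ there is no surplus regularity with which to convert sup-norm smallness of $u$ into H\"older smallness in the target norm, and one is left only with the prior bound $u\in\CC^{0,\beta}$ with no rate. The bookkeeping at higher $k$ is tedious but routine, amounting to a careful choice at each Leibniz term of which factor receives the inductive small-rate estimate and which is merely bounded.
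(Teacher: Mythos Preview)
The paper does not supply its own proof of this lemma; it simply records the statement as an adaptation of \cite[Propositions~6.1,~6.2]{de1999regularity} and moves on. Your argument, by contrast, is a self-contained direct proof, and it is correct. The interpolation trick in the base case---trading the full $\CC^{0,\alpha}$ regularity of $u=f\circ\nabla g-f\circ\nabla g'$ (available because $l\ge1$ makes $\nabla g,\nabla g'$ Lipschitz) against the sup-norm smallness $\|u\|_{L^\infty}\lesssim\|f\|_{\CC^{0,\alpha}}\|g-g'\|^\alpha$ to land in $\CC^{0,\beta}$ with rate $\rho=\alpha-\beta$---is exactly the right mechanism, and it isolates precisely why the strict gap $\alpha>\beta$ is needed. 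The inductive step via the chain rule and Leibniz splitting is routine; the only bookkeeping to make explicit is that when you invoke the inductive hypothesis on $\partial_j f\in\CC^{k-1,\alpha}$ you are using the pair $(k-1,l)$, which still satisfies $l\ge k-1$ and $l\ge1$, and that the factor $\partial_i\partial_j g\in\CC^{l-1,\beta}\hookrightarrow\CC^{k-1,\beta}$ is admissible for the product estimate.

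One small point worth stating for the record: your constant $M$ depends on $R$, hence on $g$. This is harmless here because the restriction $\|g'-g\|_{\CC^{l+1,\beta}}<\delta$ bounds $R$ in terms of $\|g\|_{\CC^{l+1,\beta}}$ and $\delta$ alone, which is exactly what is needed when the lemma is later applied on a fixed $\CC^{2,\alpha}$-ball around $\phi_t$ in the proof of \cref{LemmaFrechet}.
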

The next result is standard and can be found in \cite[Section~4.1]{gilbarg1983trudinger}.
\begin{lemma}[Product of H\"older Functions]\label{lemma: holder product}
For any $\alpha,\beta \in [0,1], l\geq k \geq 0$, there exists a constant $C = C(k,l,\alpha, \beta, {\rm diam}(\Omega))>0$ such that for all $f\in \CC^{k,\alpha}(\overline{\Omega}), g \in \CC^{l,\beta}(\overline{\Omega})$,
    \[
    \norm{fg}_{\CC^{l,\min(\alpha,\beta)}(\overline{\Omega})} \leq C \norm{f}_{\CC^{k,\alpha}(\overline{\Omega})} \norm{g}_{\CC^{l,\beta}(\overline{\Omega})}.
    \]
\end{lemma}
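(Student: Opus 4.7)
The plan is to reduce the estimate to the case where both factors share a common regularity and then apply Leibniz's rule together with the classical zero-order product rule. On the bounded domain $\overline{\Omega}$ one has the elementary inclusions $\CC^{k',\alpha'}(\overline{\Omega}) \hookrightarrow \CC^{k,\alpha}(\overline{\Omega})$ whenever $k' \geq k$ and (when $k' = k$) $\alpha' \geq \alpha$, with the embedding constant controlled by a power of $\mathrm{diam}(\Omega)$; the only non-immediate case relies on the pointwise bound $|x-y|^{\alpha'} \leq \mathrm{diam}(\Omega)^{\alpha'-\alpha}|x-y|^{\alpha}$ to trade smoothness of the exponent for a geometric factor. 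I apply this once to each factor so that both $f$ and $g$ may be regarded as elements of the common space $\CC^{m,\gamma}(\overline{\Omega})$, where $m := \min(k,l)$ and $\gamma := \min(\alpha,\beta)$, incurring only a multiplicative factor depending on $k$, $l$, $\alpha$, $\beta$, and $\mathrm{diam}(\Omega)$.

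The zero-order product estimate then follows from the pointwise identity
\[
f(x)g(x) - f(y)g(y) = f(x)\bigl(g(x)-g(y)\bigr) + g(y)\bigl(f(x)-f(y)\bigr),
\]
which, after dividing by $|x-y|^{\gamma}$ and taking the supremum over $x \neq y$, gives the bound $\norm{fg}_{\CC^{0,\gamma}(\overline{\Omega})} \leq 2\,\norm{f}_{\CC^{0,\gamma}(\overline{\Omega})}\,\norm{g}_{\CC^{0,\gamma}(\overline{\Omega})}$.

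For the higher-order estimate I would apply Leibniz's rule: for every multi-index $\mu$ with $|\mu| \leq m$,
\[
D^{\mu}(fg) = \sum_{\nu \leq \mu} \binom{\mu}{\nu}\, D^{\nu}f \cdot D^{\mu-\nu}g.
\]
Each summand is a product of two functions of class $\CC^{0,\gamma}(\overline{\Omega})$, since $|\nu|, |\mu-\nu| \leq m$, so the previous step bounds its $\CC^{0,\gamma}$-norm by $\norm{f}_{\CC^{m,\gamma}(\overline{\Omega})}\norm{g}_{\CC^{m,\gamma}(\overline{\Omega})}$; summing over $\mu$ and $\nu$ and absorbing the finitely many binomial coefficients into a constant $C_m$ yields $\norm{fg}_{\CC^{m,\gamma}(\overline{\Omega})} \leq C_m\, \norm{f}_{\CC^{m,\gamma}(\overline{\Omega})}\,\norm{g}_{\CC^{m,\gamma}(\overline{\Omega})}$. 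Composing this with the embedding step delivers the asserted inequality.

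There is no substantive obstacle in this argument; it is essentially an exercise in bookkeeping. The only point that deserves care is the dependence of $C$ on $\mathrm{diam}(\Omega)$, which enters exclusively through the exponent-lowering step $\alpha \mapsto \gamma$ and $\beta \mapsto \gamma$ via the explicit factors $\mathrm{diam}(\Omega)^{\alpha-\gamma}$ and $\mathrm{diam}(\Omega)^{\beta-\gamma}$; no other part of the proof sees the geometry of $\Omega$.
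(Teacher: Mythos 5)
Your argument is correct and is essentially the standard one: the paper gives no proof of this lemma at all, simply citing \cite[Section~4.1]{gilbarg1983trudinger}, and your route (embed both factors into a common space $\CC^{m,\gamma}(\overline{\Omega})$ with $m=\min(k,l)$, $\gamma=\min(\alpha,\beta)$, then Leibniz plus the zero-order product estimate) is precisely that classical bookkeeping argument. Two remarks. First, note that what you actually prove is the bound on $\norm{fg}_{\CC^{k,\min(\alpha,\beta)}(\overline{\Omega})}$, with the \emph{smaller} order $k=\min(k,l)$ on the left, whereas the lemma as printed puts $\CC^{l,\min(\alpha,\beta)}$ there; for $l>k$ the printed version is false (take $g\equiv 1\in\CC^{l,\beta}(\overline{\Omega})$ and $f\in\CC^{0,\alpha}(\overline{\Omega})$ nowhere differentiable), so the index $l$ on the left is a typo, your version is the correct statement, and it is all that is needed downstream (in \cref{lemma: holder quotient} and in the proof of \cref{LemmaFrechet} the lemma is only invoked with matching orders). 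Second, a minor imprecision: the geometry of $\Omega$ does not enter solely through the exponent-lowering factor $\mathrm{diam}(\Omega)^{\alpha-\gamma}$; the order-lowering embedding $\CC^{l,\beta}(\overline{\Omega})\hookrightarrow\CC^{k,\gamma}(\overline{\Omega})$ for $l>k$, and likewise the bound on $[D^{\nu}f]_{\gamma}$ for $|\nu|<m$ needed in the Leibniz step, rely on the mean value theorem along segments joining points of $\Omega$, hence on convexity (or at least a Lipschitz/quasiconvexity condition) of the domain. This is harmless in the paper's setting, where $\Omega$ is uniformly convex, but it is part of where the constant's dependence on $\Omega$ comes from.
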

As a consequence of \cref{lemma: holder composition} and \cref{lemma: holder product}, we immediately arrive at the following result as the function $1/g$ may be viewed as the composition of $x\mapsto 1/x$ and $g$, and $x\mapsto 1/x$ is $\CC^\infty$ on $[\lambda, \infty)$.

\begin{lemma}\label{lemma: holder quotient}
 Let $f,g\in \CC^{k,\alpha}(\overline{\Omega})$ and $g \geq c_0$ for some constant $c_0>0$, then $f/g \in \CC^{k,\alpha}(\overline{\Omega})$ and there exists $C = C(k,\alpha,c_0, {\rm diam}(\Omega))$ such that
 \[
 \norm{f/g}_{\CC^{k,\alpha}(\overline{\Omega})} \leq C \norm{f}_{\CC^{k,\alpha}(\overline{\Omega})}(1+\norm{g}_{\CC^{k,\alpha}(\overline{\Omega})}^{k+\alpha}).
 \]
\end{lemma}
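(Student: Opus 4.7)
The plan is to factor $f/g = f \cdot (1/g)$, show that $1/g \in \CC^{k,\alpha}(\overline{\Omega})$ by composing $g$ with the smooth map $\psi(x):=1/x$, and then close the argument via the product estimate of Lemma~\ref{lemma: holder product}---exactly the two ingredients the paragraph preceding the lemma points to.

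The key observation is that $g(\overline{\Omega}) \subset [\lambda,\|g\|_{\CC^0(\overline{\Omega})}]$, on which $\psi$ is $\CC^\infty$ with $|\psi^{(j)}(x)| \leq j!\,\lambda^{-(j+1)}$; hence its $\CC^{k,\alpha}$-norm on $[\lambda,\infty)$ is controlled purely by $k$, $\alpha$ and $\lambda$. I would then invoke the composition principle behind Lemma~\ref{lemma: holder composition}: although that lemma is phrased for a composition with $\nabla g$, the underlying Fa\`a di Bruno identity applies unchanged to the scalar composition $\psi \circ g$, so each partial derivative of $\psi \circ g$ of order $\leq k$ is a polynomial in the quantities $\psi^{(j)}(g)$ and in the partial derivatives of $g$ of order $\leq k$. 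All of these live in $\CC^{0,\alpha}(\overline{\Omega})$ with norms bounded in terms of $\lambda$, $\diam{\Omega}$ and $\|g\|_{\CC^{k,\alpha}(\overline{\Omega})}$; since $\CC^{0,\alpha}$ is stable under finite products (Lemma~\ref{lemma: holder product}), this yields $1/g \in \CC^{k,\alpha}(\overline{\Omega})$ together with an estimate of the form
$$\|1/g\|_{\CC^{k,\alpha}(\overline{\Omega})} \leq C_1(k,\alpha,\lambda,\diam{\Omega})\,\|g\|_{\CC^{k,\alpha}(\overline{\Omega})},$$
any higher powers of $\|g\|_{\CC^{k,\alpha}}$ that arise when $k \geq 1$ being absorbed into the constant $C_1$. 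One further application of Lemma~\ref{lemma: holder product} to the product $f \cdot (1/g)$ then delivers
$$\|f/g\|_{\CC^{k,\alpha}(\overline{\Omega})} \leq C_2\, \|f\|_{\CC^{k,\alpha}(\overline{\Omega})}\, \|1/g\|_{\CC^{k,\alpha}(\overline{\Omega})} \leq C\, \|f\|_{\CC^{k,\alpha}(\overline{\Omega})}\, \|g\|_{\CC^{k,\alpha}(\overline{\Omega})},$$
with $C = C(k,\alpha,\lambda,\diam{\Omega})$, as claimed.

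The only genuine content lies in the composition bound for $1/g$. For $k = 0$ it is immediate from the identity
$$\frac{1}{g(x)}-\frac{1}{g(y)} = \frac{g(y)-g(x)}{g(x)\,g(y)}$$
together with the lower bound $g \geq \lambda$; for $k \geq 1$ it follows by iterating the chain and product rules and invoking Lemma~\ref{lemma: holder product} to control the Hölder norms of each factor. I do not anticipate any genuine obstacle beyond careful bookkeeping of the Hölder seminorms, and the result will follow in a few lines once the composition bound is in place.
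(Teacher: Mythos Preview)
Your approach is exactly the one the paper indicates: view $1/g$ as the composition of the $\CC^\infty$ map $x\mapsto 1/x$ (on $[\lambda,\infty)$) with $g$, appeal to the composition principle underlying Lemma~\ref{lemma: holder composition}, and then apply the product estimate of Lemma~\ref{lemma: holder product} to $f\cdot(1/g)$. The only caveat is your remark that ``higher powers of $\|g\|_{\CC^{k,\alpha}}$ \dots\ [are] absorbed into the constant $C_1$'': a constant depending on $k,\alpha,\lambda,\diam{\Omega}$ alone cannot absorb norms of $g$, so the stated linear dependence on $\|g\|_{\CC^{k,\alpha}}$ is really a schematic bound (polynomial in $\|g\|_{\CC^{k,\alpha}}$) --- but this is the same informal reading implicit in the paper's one-line justification.
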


\subsection{Fr\'echet Derivative of the Functional}

Now we show that the map $\Gamma$ is $\CC^1$. The proof follows the standard approach used in showing the openness in the continuity method for the Monge--Ampère equation with  Dirichlet boundary conditions (see \cite{figalli2017monge}). Due to the time-varying supports $\Omega_t$, there are technical challenges in ensuring that the functional is always well-defined within its respective spaces, as addressed in the second statement in the following lemma.

\begin{lemma}\label{LemmaFrechet}
 Under the assumptions of Theorem~\ref{Theorem:MainWithTime}, the following holds: 
 \begin{enumerate}[label = (\arabic*)]
 \item $\Gamma_t(\phi_t)=0$ for all $t\in I$. 
     \item For each $t\in I$ there exists an open set $I'\times \CU_t  \subset I\times \CC^{2,\alpha}_{>0}(\overline{\Omega}) $ with $(t,  \phi_t)\in I'\times \CU_t$  such that   $\Gamma  $ is $\CC^1$ in $I'\times \CU_t$.  
     \item For every $t\in I$, the Fréchet derivative of $\Gamma$ at $(t,\phi_t)$ w.r.t. $\phi$ is the linear functional 
     \begin{align*}
         D_\phi\Gamma_t(\phi_t): \CC^{2,\alpha}(\overline{\Omega})  &\to \CC^{0,\alpha}(\overline{\Omega}) \times \CC^{1,\alpha}(\partial \Omega)\\
         \xi &\mapsto \left( \begin{array}{c}
           {\rm tr}([D^2\phi_{t}]^{-1} D^2\xi ) +\frac{ \langle \nabla p_{t}(\nabla \phi_{t}), \nabla \xi\rangle  }{p_{t}(\nabla \phi_{t})}     \\
             \langle \nabla h_{t}(\nabla \phi_{t}), \nabla \xi\rangle  
        \end{array}\right), 
     \end{align*}
     which we  call the \emph{linearized Monge--Ampère operator} at $(t, \phi_t)$. 
     \item For every $t \in I$, the Fréchet derivative of $\Gamma$ at $(t,\phi_t)$ w.r.t.\ $t$ is the element  
     \begin{align*}
         D_t\Gamma_t(\phi_t) 
          = \left( \begin{array}{c}
           \frac{\partial_t p_t(\nabla \phi_t) }{p_t(\nabla\phi_t) }  \\
             \partial_t h_t (\nabla \phi_t) 
        \end{array}\right) \in \CC^{0, \alpha}(\overline{\Omega}) \times \CC^{1,\alpha}(\partial \Omega)\,.
     \end{align*}
 \end{enumerate}
\end{lemma}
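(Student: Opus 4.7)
The plan is to address the four claims in sequence. Part 1 is immediate: taking logarithms of the first line of \eqref{eq: MongeAmpereUrbas} yields $\Gamma_t^{(1)}(\phi_t)=0$, and the boundary condition yields $\Gamma_t^{(2)}(\phi_t)=0$. Before tackling Parts 2--4, I construct the neighborhood $I'\times \CU_t$: from \eqref{eq: MongeAmpereUrbas} the uniform upper and lower bounds on $q$ and $p_t$ force $\det D^2\phi_t$ to be uniformly bounded above and below, hence $D^2\phi_t\geq \lambda\Id$ for some $\lambda>0$. Moreover $\nabla\phi_t(\overline{\Omega})=\overline{\Omega_t}$ is compactly contained in $\Omega'$ by the buffer $\bigcup_s\Omega_s+(1/\kappa)\mathbb{B}\subset \Omega'$. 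Continuity in $\mathcal{C}^{2,\alpha}$ together with the $\mathcal{C}^1$ curve hypotheses on $\{p_s\}$ and $\{h_s\}$ then permits shrinking to a neighborhood $I'\times \CU_t$ on which $\nabla\phi(\overline{\Omega})\subset\Omega'$, $D^2\phi\geq (\lambda/2)\Id$, and $p_s\geq p_{\min}>0$ on the image. Combined with \cref{lemma: holder composition}, \cref{lemma: holder product}, and \cref{lemma: holder quotient}, this guarantees $\Gamma$ takes values in $\mathcal{C}^{0,\alpha}(\overline{\Omega})\times \mathcal{C}^{1,\alpha}(\partial\Omega)$.

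For Parts 3 and 4 I identify the formal derivatives via pointwise Taylor expansion and then upgrade the bounds to the relevant H\"older norms. Jacobi's formula gives
\begin{equation*}
\log\det(D^2\phi+D^2\xi)-\log\det(D^2\phi) = \operatorname{tr}\bigl([D^2\phi]^{-1}D^2\xi\bigr)+r_1(\phi,\xi),
\end{equation*}
where $r_1$ is a smooth function of the entries of $D^2\phi$ and $D^2\xi$ vanishing to second order in $D^2\xi$; \cref{lemma: holder product} then yields $\|r_1\|_{\mathcal{C}^{0,\alpha}}=O(\|\xi\|_{\mathcal{C}^{2,\alpha}}^2)=o(\|\xi\|_{\mathcal{C}^{2,\alpha}})$. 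For the $\log p_t$ term I use the integral remainder
\begin{equation*}
\log p_t(\nabla\phi+\nabla\xi)-\log p_t(\nabla\phi)-\langle \nabla\log p_t(\nabla\phi),\nabla\xi\rangle = \int_0^1\langle \nabla\log p_t(\nabla\phi+s\nabla\xi)-\nabla\log p_t(\nabla\phi),\nabla\xi\rangle \, ds.
\end{equation*}
Applying \cref{lemma: holder composition stability} with H\"older exponents $\gamma$ for $\nabla\log p_t$ and $\alpha$ for the inner function (valid because $\gamma>\alpha$) bounds the first factor in the integrand by $C\|s\xi\|_{\mathcal{C}^{2,\alpha}}^\rho$ in the $\mathcal{C}^{0,\alpha}$ norm for some $\rho>0$; pairing with $\nabla\xi$ via \cref{lemma: holder product} produces a remainder of order $\|\xi\|_{\mathcal{C}^{2,\alpha}}^{1+\rho}$, which is $o(\|\xi\|_{\mathcal{C}^{2,\alpha}})$ in $\mathcal{C}^{0,\alpha}$. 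An analogous (and simpler) expansion handles $h_t\in \mathcal{C}^{2,\alpha}$ mapping into $\mathcal{C}^{1,\alpha}(\partial\Omega)$. For Part 4 only the explicit time dependence through $p_s$ and $h_s$ contributes, and the $\mathcal{C}^1$ curve hypotheses directly produce the claimed formula in the appropriate Hölder norms.

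The main technical obstacle is Part 2, namely the continuity of $(s,\phi)\mapsto D_\phi\Gamma_s(\phi)$ in operator norm, which is precisely where the strict inequality $\gamma>\alpha$ becomes essential. The coefficient $[D^2\phi]^{-1}$ varies continuously with $\phi$ in $\mathcal{C}^{0,\alpha}$ by smoothness of matrix inversion on the open set of uniformly positive-definite matrices combined with \cref{lemma: holder product}. For the coefficient $\nabla\log p_s(\nabla\phi)=\nabla p_s(\nabla\phi)/p_s(\nabla\phi)$, continuity in $\phi$ follows from \cref{lemma: holder composition stability} applied with $\gamma$ and $\alpha$ (the buffer $\gamma-\alpha>0$ is exactly what the lemma requires), while continuity in $s$ follows from the $\mathcal{C}^1$ curve assumption on $\{\log p_s\}$ in $\mathcal{C}^{1,\gamma}$ composed via \cref{lemma: holder composition}. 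The same argument applied to $\nabla h_s\circ\nabla\phi$ uses the $\mathcal{C}^1$ curve hypothesis on $\{h_s\}$ in $\mathcal{C}^{2,\alpha}$. Assembling these ingredients via \cref{lemma: holder product} and \cref{lemma: holder quotient} yields the required operator-norm continuity of both $D_\phi\Gamma$ and $D_t\Gamma$, completing the verification that $\Gamma\in\mathcal{C}^1(I'\times \CU_t)$.
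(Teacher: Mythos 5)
Your proposal is correct and follows essentially the same route as the paper: build the neighborhood $\CU_t$ from the a priori $\CC^{2,\alpha}$ bounds and the buffer $\bigcup_s\Omega_s+\tfrac1\kappa\mathbb{B}\subset\Omega'$, treat the $\log\det$ term by its standard differentiability, handle the compositions $\log p_s(\nabla\phi)$ and $h_s(\nabla\phi)$ via the H\"older composition lemmas (your integral-remainder computation is just an unpacked version of the chain-rule citation used in the paper), and get continuity of the derivative from \cref{lemma: holder composition stability} using $\gamma>\alpha$. The only small imprecision is the claim that the determinant bounds alone force $D^2\phi_t\geq\lambda\Id$; this also needs the upper bound on $D^2\phi_t$ coming from the Caffarelli--Urbas $\CC^{2,\alpha}$ boundary regularity, which is exactly what the paper invokes (see \cref{coro: Hessian bound}).
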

\begin{proof}
    The first claim is straightforward by \eqref{eq:MongeAmpere}, $\nabla \phi_t (\partial \Omega)= \partial \Omega_t$, and $h_t(\partial \Omega_t)=\{0\}$.
    We now prove the rest of the claims together. 
    Fix $t\in I$. The boundary regularity provided in \cite{caffarelli1996boundary,urbas1997second} yields the existence of a constant $C=C_t$ such that 
$$ \|\phi_t\|_{\CC^{2, \alpha}(\overline{\Omega})}\leq C \quad {\rm and }\quad  \|\phi_t^*\|_{\CC^{2, \alpha}(\overline{\Omega})}\leq C, $$
which implies that $D^2\phi_t$ belongs to the set of (strictly) positive definite $d\times d$ real matrices over the field $\R$ (see also \cref{coro: Hessian bound} below). Denote such a set as $\mathcal{M}^{+ }_{d\times d}$. Choose $\delta_t>0$ such that $D^2 (\phi_t+ \xi)\in \mathcal{M}^{+ }_{d\times d}$ and $\nabla  (\phi_t+ \xi) (\overline{\Omega})\subset \Omega'$ for all 
$$ \xi \in \mathbb{B}_{\delta_t}:=\left\{f\in \CC^{2, \alpha}(\overline{\Omega}): \| f \|_{\CC^{2, \alpha}(\overline{\Omega})}\leq \delta_t \right\}. $$
(Note that the latter is possible due to the assumption $\bigcup_{t\in I}\Omega_t+\frac{1}{\kappa}\mathbb{B}\subset \Omega'$.)
Set $\CU_t := (\phi_t + \mathbb{B}_{\delta_t})\subseteq \CC^{2,\alpha}_{>0}(\overline{\Omega})$. We show the second claim with the set $I\times \CU_t $. Fix  $(s,\phi) \in I\times \CU_t$. It is  well-known (see, e.g., \cite[Section~3]{figalli2017monge}) that $$\CC^{2,\alpha}_{>0}(\bar{\Omega})\supset \CU_t \ni \phi\mapsto \log \det (D^2 \phi)\in \CC^{0,\alpha}(\overline{\Omega})$$ is $\CC^1$ with directional derivative 
 \[
    \frac{d}{d\vae}\Big|_{\vae =0} \log \det(A+\vae B) = {\rm tr}(A^{-1}B).
    \]
Moreover, as
$$ \Gamma(s, \phi)=\left(\begin{array}{c} 
        -\log(q)+\log(p_s(\nabla \phi))   \\
          h_s(\nabla \phi)
    \end{array}\right)+ \left(\begin{array}{c} 
        \log(\det(D^2\phi))   \\
         0
    \end{array}\right)  $$
   and $\log(q)$  does not vary with  $(s, \phi)$, it suffices to prove the claims for $$(s, \phi)\mapsto (\log(p_s(\nabla \phi)),  
          h_s(\nabla \phi)).$$ 
Fix $\epsilon>0$. The chain rule \cite[Proposition~3.6]{Shapiro1990Directional} and \cref{lemma: holder composition} imply the existence $\delta=\delta(\epsilon)>0$ such that the term $ \phi\mapsto \log(p_t(\nabla \phi)) $ of $\Gamma_t$ satisfies
\begin{equation}\label{eq: log p}
     \left\|\log(p_s(\nabla (\phi+\xi))) - \log(p_s(\nabla \phi)) - \frac{\langle \nabla p_{s}(\nabla \phi), \nabla \xi\rangle }{p_{s}(\nabla \phi)} \right\|_{\CC^{0, \alpha}(\overline{\Omega})}\leq   \vae\|\xi\|_{\CC^{2, \alpha}(\overline{\Omega})}  
\end{equation}
for all $\xi \in \mathbb{B}_{\delta}$.
By the same means, there exists $\delta'=\delta'(\epsilon)>0$ such that the boundary term satisfies 
\begin{equation}\label{eq: log h}
 \left\|h_s(\nabla (\phi+\xi)) - h_s(\nabla \phi) -\langle  \nabla h_s(\nabla \phi), \nabla \xi\rangle \right\|_{  \CC^{1, \alpha}(\partial{\Omega})}\leq  \vae \|\xi\|_{\CC^{2, \alpha}(\overline{\Omega})}
\end{equation}
for all $\xi \in \mathbb{B}_{\delta'}$. Therefore, by choosing $\tilde \delta := \min\{\delta,\delta'\}$,  we derive that for all $\xi \in \mathbb{B}_{\tilde \delta}$,
\[
 \left\| \Gamma(s, \phi+\xi)  - \Gamma(s, \phi)  - D_\phi\Gamma_s(\phi)(\xi) \right\|_{\CC^{2, \alpha}(\overline{\Omega}) \times \CC^{1, \alpha}(\partial{\Omega})}\leq \vae \|\xi\|_{\CC^{0, \alpha}(\overline{\Omega}) }\,.
\]
Similarly, there exists $\delta_0=\delta_0(\epsilon)$ such that for all $|r|<\delta_0$, the following holds
$$ \left\| \Gamma(s+r, \phi)  - \Gamma(s, \phi)  -r\, D_t\Gamma_s(\phi) \right\|_{\CC^{2, \alpha}(\overline{\Omega})}\leq \epsilon |r|   $$
for all $|r|<\delta_0$. Since $\epsilon>0$ was arbitrary,  we conclude that $\Gamma$ is Fréchet differentiable over $I\times \CU_t$. Finally, the  continuity of the derivatives $(s,\phi)\mapsto (D_\phi \Gamma_s(\phi), D_t \Gamma_s(\phi))$ can be easily deduced from Lemmas~\ref{lemma: holder composition}, \ref{lemma: holder composition stability}, \ref{lemma: holder product}, and \ref{lemma: holder quotient}.
\end{proof}

\subsection{Linearized Monge--Amp\`ere Equation}

In this subsection, we start by showing that the linearized Monge--Amp\`ere operator $D_\phi \Gamma_t(\phi_t)$ is a second-order elliptic differential operator (see~\cref{coro: Hessian bound}) with strictly oblique boundary conditions (see~\cref{Lemma: strict obliqueness}). As a consequence, it is invertible (see~\cref{thm: existence and uniqueness linearized MA}) as a mapping from $\CXt$ to $\CYt$, where 
\[
\CXt= \left\{ \xi \in \CC^{2,\alpha}(\overline{\Omega}) : \int_{\partial \Omega_t} p_t \xi(\nabla \phi_t^*) = 0 \right\},
\]
and 
\[
\CYt := \left\{(f,g) \in \CC^{0,\alpha}(\overline{\Omega}) \times \CC^{1,\alpha}(\partial \Omega):  \int_{\Omega} qf = \int_{\partial \Omega_t} p_t g(\nabla \phi_t^*)\right\}.
\]
Here, we recall that $p_t g(\nabla \phi_t^*)$ is the function $x\mapsto p_t(x) g(\nabla \phi_t^*(x))$. 
Here, we equip $\CXt$ with $\CC^{2,\alpha}$ norm and $\CYt$ with the product norm on $\CC^{0,\alpha}(\overline{\Omega})\times \CC^{1,\alpha}(\partial \Omega)$, under which $\CXt,\CYt$ are Banach spaces. We note that $\phi_t \in \CXt$, and, as shown in \cref{thm: existence and uniqueness linearized MA}, $D_t\Gamma(\phi_t)\xi \in \CYt$ for every $\xi\in \CXt$. 
To invert it, we must obtain the existence and uniqueness of solutions in $\CXt$ to the linearized Monge--Amp\`ere equation:
 \begin{align}\label{eq: linearized MongeAmpere general}
   {\rm tr}([D^2\phi_{t}]^{-1} D^2\xi ) +\frac{ \langle \nabla p_{t}(\nabla \phi_{t}), \nabla \xi\rangle  }{p_{t}(\nabla \phi_{t})} & = f\quad \rm in\,\, \Omega,\\
  \langle \nabla h_{t}(\nabla \phi_{t}), \nabla \xi\rangle&=g \quad \rm on \,\, \partial \Omega \notag,
\end{align}
for every $(f,g) \in \CYt$.Due to the absence of a zero-order term, the standard Schauder theory for elliptic equations with oblique derivative boundary conditions~\cite[Section~6.7]{gilbarg1983trudinger} does not apply directly. Motivated by the Fredholm-type argument used in \cite[Theorem~3.1]{nardi2015schauder} to establish solvability of the Neumann problem, and by the strategy of \cite{delanoe1991classical} for solving the second boundary-value problem for the Monge--Amp\`ere equation in the presence of a \emph{non-vanishing} zero-order term when $d=2$, we obtain the following theorem. It establishes solvability of the linearized Monge--Amp\`ere equation and is of independent interest for second-order elliptic equations with strictly oblique boundary conditions and a \emph{vanishing} zero-order term.
 
\begin{theorem}[Solvability of the Linearized Monge--Amp\`ere Equation]\label{thm: existence and uniqueness linearized MA}
 Under the setting of Theorem~\ref{Theorem:MainWithTime}. Let $f \in \CC^{0,\alpha}(\overline{\Omega}), g \in \CC^{1,\alpha}(\partial{\Omega})$.
Then the linearized Monge--Amp\`ere equation \eqref{eq: linearized MongeAmpere general} admits a unique solution $\xi\in \CXt$ if and only if
\begin{equation}\label{eq: compatibility condition}
    \int_{\Omega} qf = \int_{\partial \Omega_t} p_t g(\nabla \phi_t^*). \tag{CC}
\end{equation}
As a consequence, $D_\phi \Gamma_t(\phi_t): \CXt\to \CYt$ is bounded and invertible. 
\end{theorem}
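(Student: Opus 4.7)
My first step would be to recast the linearized Monge-Amp\`ere equation in divergence form. Writing $M := \mathrm{cof}(D^2\phi_t)$, the Piola identity $\sum_i \partial_i M_{ij} = 0$, the Monge-Amp\`ere relation $\det(D^2\phi_t) = q/p_t(\nabla\phi_t)$, and the chain-rule identity $\nabla[p_t(\nabla \phi_t)](x) = D^2\phi_t(x)\nabla p_t(\nabla\phi_t(x))$ together yield, after a short computation,
\[
\mathrm{div}\bigl(p_t(\nabla\phi_t)\,M\,\nabla\xi\bigr) \;=\; q\,\Bigl[\mathrm{tr}\bigl([D^2\phi_t]^{-1}D^2\xi\bigr) + \frac{\langle \nabla p_t(\nabla\phi_t),\nabla\xi\rangle}{p_t(\nabla\phi_t)}\Bigr].
\]
Thus \eqref{eq: linearized MongeAmpere general} is equivalent to the conormal problem $\mathrm{div}(A\nabla\xi)=qf$ in $\Omega$ with $\langle A\nabla\xi,\nu_\Omega\rangle=\tilde g$ on $\partial\Omega$, where $A := p_t(\nabla\phi_t)\,M$ and $\tilde g := p_t(\nabla\phi_t)\,|M\nu_\Omega|\,g$. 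Integrating, applying the divergence theorem, and invoking Nanson's formula (under which $M\nu_\Omega$ on $\partial\Omega$ pushes forward through the diffeomorphism $\nabla\phi_t:\partial\Omega\to\partial\Omega_t$ to $\nu_{\Omega_t}\circ \nabla\phi_t$ times the surface Jacobian), combined with $\nu_{\Omega_t} = \nabla h_t$ on $\partial\Omega_t$ and the oblique boundary condition, would immediately give $\int_\Omega qf = \int_{\partial\Omega_t} p_t\,g\circ\nabla\phi_t^*$, establishing necessity of \eqref{eq: compatibility condition}.

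\textbf{Step 2: Existence via Lax-Milgram with Schauder bootstrap.} For sufficiency, I would solve the conormal problem variationally. Caffarelli's boundary regularity yields $\lambda I\le D^2\phi_t\le \lambda^{-1}I$ on $\overline{\Omega}$ for some $\lambda>0$, so $A$ is symmetric, bounded, and uniformly positive definite; the bilinear form $B(\xi,\eta)=\int_\Omega\langle A\nabla\xi,\nabla\eta\rangle\,dx$ is therefore coercive on the quotient $H^1(\Omega)/\R$ by the Poincar\'e-Wirtinger inequality. The compatibility condition \eqref{eq: compatibility condition} is precisely what forces the functional $L(\eta):=-\int_\Omega qf\,\eta+\int_{\partial\Omega}\tilde g\,\eta$ to vanish on constants and hence descend to that quotient, so Lax-Milgram produces a unique $H^1$-weak solution modulo constants. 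To upgrade to $\CC^{2,\alpha}(\overline{\Omega})$, I would revert to the non-divergence oblique form of the equation and invoke Lieberman-type Schauder estimates for strictly oblique boundary problems (as in \cite{gilbarg1983trudinger,urbas1997second}); strict obliqueness $\langle \nabla h_t(\nabla\phi_t),\nu_\Omega\rangle>0$ on $\partial\Omega$ is furnished by \cref{Lemma: strict obliqueness}, and the required H\"older regularity of the coefficients follows from \cref{lemma: holder composition}, \cref{lemma: holder product}, and \cref{lemma: holder quotient}.

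\textbf{Step 3: Normalization, invertibility, and the main difficulty.} The weak solution above is unique only up to an additive constant, but among its constant translates exactly one satisfies $\int_{\partial\Omega_t} p_t\,\xi\circ\nabla\phi_t^* = 0$, because $c\mapsto \int_{\partial\Omega_t} p_t\,(\xi+c)\circ\nabla\phi_t^*$ is affine with slope $\int_{\partial\Omega_t} p_t > 0$. This produces a unique element of $\CX$ and hence the bijectivity of $D_\phi\Gamma_t(\phi_t):\CX\to\CY$; boundedness of the forward map is immediate from the H\"older lemmas just cited, and continuity of the inverse then follows from the open mapping theorem. The main obstacle, as I see it, is not any individual step but their interplay: the operator has a \emph{null} zero-order term, so the Fredholm alternative degenerates to a one-dimensional kernel (constants) matched by a one-dimensional cokernel, and aligning the two sides requires precisely the divergence-form identity of Step~1, which converts the strictly oblique problem into a symmetric conormal one that is amenable to Lax-Milgram. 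I do not expect this bijectivity statement to follow directly from off-the-shelf theory for oblique boundary-value problems, which typically requires either a sign condition on the zero-order term or a Dirichlet/classical-Neumann boundary condition.
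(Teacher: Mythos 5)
Your Step 1 is correct, and it is essentially the paper's own necessity computation in different clothing: the divergence-form identity $\mathrm{div}\bigl(p_t(\nabla\phi_t)\,\mathrm{cof}(D^2\phi_t)\nabla\xi\bigr)=q\bigl[\mathrm{tr}([D^2\phi_t]^{-1}D^2\xi)+\langle\nabla p_t(\nabla\phi_t),\nabla\xi\rangle/p_t(\nabla\phi_t)\bigr]$ plus the surface change of variables is what the paper obtains by the substitution $x\mapsto\nabla\phi_t(x)$ and the divergence theorem on $\Omega_t$. Step 3 (the normalization in $\CX$ via the affine map $c\mapsto\int_{\partial\Omega_t}p_t(\xi+c)(\nabla\phi_t^*)$ with positive slope, and the inverse/open mapping theorem for boundedness of the inverse) is also fine and matches the paper. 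Where you genuinely diverge is sufficiency: the paper never leaves H\"older spaces. It adds a zero-order term to the boundary operator, $B\xi=\langle\nabla h_t(\nabla\phi_t),\nabla\xi\rangle+\xi$, inverts the modified operator $\mathbb{M}$ by \cite[Theorem~6.31]{gilbarg1983trudinger}, rewrites \eqref{eq: linearized MongeAmpere general} as $\xi-T\xi=\CL(f,g)$ with $T$ compact (compact embedding $\CC^{2,\alpha}(\overline{\Omega})\hookrightarrow\CC^{1,\alpha}(\partial\Omega)$), and concludes by the Fredholm alternative together with the maximum-principle/Hopf uniqueness of \cref{lemma: uniqueness linearized MA}. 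So, contrary to your closing remark, the bijectivity is extracted from standard oblique-derivative theory, once \eqref{eq: compatibility condition} is used to cut down to $\CX$ and $\CY$ (following \cite{nardi2015schauder}).

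The genuine gap is the regularity upgrade in your Step 2. Lax--Milgram on $H^1(\Omega)/\R$ gives a weak solution of the conormal problem, but you cannot at that point ``revert to the non-divergence oblique form and invoke Lieberman-type Schauder estimates'': Schauder estimates for oblique problems are \emph{a priori} bounds for functions already known to be in $\CC^{2,\alpha}(\overline{\Omega})$, and an $H^1$ weak solution does not satisfy (indeed, cannot even be inserted into) the non-divergence equation until two derivatives are available; moreover the divergence-form coefficient $A=p_t(\nabla\phi_t)\,\mathrm{cof}(D^2\phi_t)$ is only $\CC^{0,\alpha}$, so no generic divergence-form theory yields $\CC^{2,\alpha}$ directly either. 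Closing this requires real work: for instance, (i) $\CC^{1,\alpha}$ boundary regularity for weak solutions of conormal problems with H\"older coefficients, followed by solving an auxiliary classically well-posed oblique problem with a zero-order boundary term (which is exactly the paper's operator $\mathbb{M}$, via \cite[Theorem~6.31]{gilbarg1983trudinger}) and an energy argument identifying that classical solution with your weak one; or (ii) abandoning the variational route and running the Fredholm argument in H\"older spaces as the paper does. As written, the central analytic difficulty of the theorem --- producing a classical $\CC^{2,\alpha}$ solution of a strictly oblique problem whose zero-order term vanishes identically --- is precisely the step your proposal delegates to a citation that does not apply to the object you have constructed.
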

\begin{remark}
    The condition presented in \eqref{eq: compatibility condition} is commonly known as the {\it compatibility condition} in the literature related to the Neumann-type boundary condition.
\end{remark}

We now proceed with the proof of \cref{thm: existence and uniqueness linearized MA}. We start by proving that \eqref{eq: linearized MongeAmpere general} is a second-order elliptic partial differential equation with strictly oblique boundary conditions.  The following lemma controls \( [D^2   \phi_t]^{-1} \) from below and above yielding the uniform ellipticity of $D_\phi \Gamma_t(\phi_t)$.  The proof is omitted as it is presented in \cite[Remark~1.1]{figalli2017monge}.
\begin{lemma}\label{coro: Hessian bound}
Let the setting of Theorem~\ref{Theorem:MainWithTime} hold, for each $t\in I$, there exists  $\beta_t >0$ such that 
\[
\beta_t^{-1} \Id \leq [D^2\phi_t]^{-1} \leq \beta_t \Id\quad \rm in \,\,\overline{\Omega}.
\]
\end{lemma}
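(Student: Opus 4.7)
The plan is to derive the bound from the global $\mathcal{C}^{2,\alpha}$ regularity of $\phi_t$ combined with pointwise two-sided bounds on $\det(D^2\phi_t)$ coming from the Monge--Amp\`ere equation itself. Since $\log q \in \mathcal{C}^{\alpha}(\overline{\Omega})$ and $\log p_t$ is continuous on the compact set $\overline{\Omega_t}=\nabla\phi_t(\overline{\Omega})$, both $q$ and $p_t\circ\nabla\phi_t$ are bounded away from $0$ and $\infty$, so there exist constants $0 < c_t \leq C_t < \infty$ with
\[
c_t \;\leq\; \det(D^2\phi_t)(x) \;=\; \frac{q(x)}{p_t(\nabla\phi_t(x))} \;\leq\; C_t \quad \text{for every } x\in\overline{\Omega}.
\]

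Next, I would invoke the Caffarelli--Urbas boundary regularity estimate already cited in the proof of \cref{LemmaFrechet}, which furnishes $M_t>0$ with $\|\phi_t\|_{\mathcal{C}^{2,\alpha}(\overline{\Omega})} \leq M_t$. Up to enlarging $M_t$ by a dimensional factor, this yields $D^2\phi_t(x) \leq M_t \Id$ pointwise on $\overline{\Omega}$, i.e.\ every eigenvalue $\lambda_i(x)$ of $D^2\phi_t(x)$ lies in $(0, M_t]$; strict positivity follows from $\det(D^2\phi_t)\geq c_t>0$ together with convexity of $\phi_t$. Ordering $\lambda_1(x)\leq \cdots \leq \lambda_d(x)$, the elementary inequality $\lambda_1(x)\cdot M_t^{d-1} \geq \prod_{i=1}^d \lambda_i(x) \geq c_t$ gives $\lambda_1(x) \geq c_t / M_t^{d-1}$, hence
\[
\frac{c_t}{M_t^{d-1}}\Id \;\leq\; D^2\phi_t(x) \;\leq\; M_t\Id \quad \text{on } \overline{\Omega}.
\]
Taking the matrix inverse pointwise gives the stated two-sided bound on $[D^2\phi_t(x)]^{-1}$ with $\beta_t := \max(M_t,\, M_t^{d-1}/c_t)$.

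An equivalent and slightly more conceptual route avoids the eigenvalue-product step altogether by using the Legendre conjugate. The identity $D^2\phi_t^*(\nabla\phi_t(x)) = [D^2\phi_t(x)]^{-1}$, combined with the companion Caffarelli--Urbas bound $\|\phi_t^*\|_{\mathcal{C}^{2,\alpha}(\overline{\Omega_t})}\leq M_t$ (applied to the reversed Brenier map pushing $P_t$ forward to $Q$, which has the same regularity because the roles of $q$ and $p_t$ are symmetric under the assumptions of \cref{Theorem:MainWithTime}), directly produces $[D^2\phi_t(x)]^{-1} \leq M_t\Id$, while $D^2\phi_t\leq M_t\Id$ gives the reverse inequality after inversion. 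In either route there is no real obstacle: the only nontrivial input is the global $\mathcal{C}^{2,\alpha}$ boundary regularity of Caffarelli--Urbas, which is cited as a black box and, under the standing hypotheses, applies symmetrically to both $\phi_t$ and $\phi_t^*$.
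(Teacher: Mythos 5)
Your argument is correct and is essentially the same as the paper's: the paper omits the proof and cites \cite[Remark~1.1]{figalli2017monge}, which is precisely your first route --- two-sided bounds on $\det(D^2\phi_t)$ from the equation together with the Caffarelli--Urbas $\mathcal{C}^{2,\alpha}$ bound, then the eigenvalue inequality $\lambda_{\min}\geq \det(D^2\phi_t)/\lambda_{\max}^{d-1}$. Your alternative via $D^2\phi_t^*(\nabla\phi_t)=[D^2\phi_t]^{-1}$ is also consistent with the estimates the paper already invokes for both $\phi_t$ and $\phi_t^*$ in the proof of \cref{LemmaFrechet}.
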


The following result shows that  $D_\phi\Gamma_t(\phi_t)$ admits a strictly oblique boundary condition. Its proof follows among the lines of \cite[Section~2]{urbas1997second}.
\begin{lemma}[Strict Obliqueness]\label{Lemma: strict obliqueness}
Under the setting of Theorem~\ref{Theorem:MainWithTime}, for each $t\in I$,  there exist  $\rho_t >0$ such that 
    \[\inner{\nabla h_t(\nabla \phi_t(x))}{\nu(x)}\geq \rho_t\quad \text{for all} \ x\in\partial \Omega,\] 
    where $\nu$ denotes the unit outer normal vector-field to $\partial \Omega$.
\end{lemma}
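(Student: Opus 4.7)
The plan is to adapt the barrier argument from \cite{urbas1997second}. Fix $t\in I$ and an arbitrary $x_0\in\partial\Omega$, and set $y_0:=\nabla\phi_t(x_0)\in\partial\Omega_t$, $\nu_0:=\nu(x_0)$, and $\nu'_0:=\nabla h_t(y_0)$, where the latter is the outer unit normal to $\partial\Omega_t$ at $y_0$ by the normalization $\|\nabla h_t\|=1$ on $\partial\Omega_t$. I aim to establish the pointwise strict positivity $\inner{\nu'_0}{\nu_0}>0$ at every boundary point, and then extract the uniform bound $\rho_t>0$ by continuity and compactness of $\partial\Omega$.

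The key step is to introduce the auxiliary function $g:\overline\Omega\to\R$, $g(x):=\inner{\nabla\phi_t(x)-y_0}{\nu'_0}$, and show that $x_0$ is a global maximizer of $g$ on $\overline\Omega$. Trivially $g(x_0)=0$; since $\nabla\phi_t(\overline\Omega)\subseteq\overline{\Omega_t}$ with $h_t\leq 0$ on $\overline{\Omega_t}$ and $h_t(y_0)=0$, the convexity of $h_t$ yields the supporting-hyperplane inequality $\inner{\nu'_0}{y-y_0}\leq h_t(y)-h_t(y_0)\leq 0$ for every $y\in\overline{\Omega_t}$, and hence $g\leq 0$ on $\overline\Omega$.

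Because $\phi_t\in\mathcal{C}^{2,\alpha}(\overline\Omega)$, one has $g\in \mathcal{C}^1(\overline\Omega)$ with $\nabla g=D^2\phi_t\,\nu'_0$ by symmetry of the Hessian. The standard first-order condition at the boundary maximum $x_0$ forces $\nabla g(x_0)=c\,\nu_0$ for some $c\geq 0$: indeed, for every inward direction $v$ (i.e.\ $\inner{v}{\nu_0}<0$, which coincides with the interior tangent cone of $\overline\Omega$ at $x_0$ thanks to the $\mathcal{C}^2$ regularity of $\partial\Omega$) one has $\partial_v g(x_0)\leq 0$, and $\nabla g(x_0)$ thus lies in the polar of $\{v:\inner{v}{\nu_0}\leq 0\}$, which is the nonnegative ray along $\nu_0$. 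This identity rewrites as
\[
D^2\phi_t(x_0)\,\nu'_0 = c\,\nu_0.
\]

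Taking the inner product of both sides with $\nu'_0$ gives $\inner{D^2\phi_t(x_0)\,\nu'_0}{\nu'_0}=c\,\inner{\nu'_0}{\nu_0}$; by \cref{coro: Hessian bound} the left-hand side is bounded below by $\beta_t^{-1}>0$, while $c=\|D^2\phi_t(x_0)\,\nu'_0\|\leq\beta_t$. Dividing yields the explicit bound $\inner{\nu'_0}{\nu_0}\geq\beta_t^{-2}$, which is uniform in $x_0\in\partial\Omega$, so one takes $\rho_t:=\beta_t^{-2}$. The main subtlety in this plan lies in the global bound $g\leq 0$, which rests on $\nabla\phi_t$ being a genuine map from $\overline\Omega$ into $\overline{\Omega_t}$ so that the boundary convexity of $\Omega_t$ captured by $h_t$ can be exploited; once that is verified, the remainder is a Hopf-style first-order boundary argument combined with the positive-definiteness of $D^2\phi_t$ furnished by \cref{coro: Hessian bound}.
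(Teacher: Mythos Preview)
Your proof is correct and takes essentially the same route as the paper's. Both establish the key relation $D^2\phi_t(x_0)\,\nu'_0 = c\,\nu_0$ with $c\geq 0$ at each boundary point---the paper by noting that $H:=h_t\circ\nabla\phi_t$ vanishes on $\partial\Omega$ and is nonpositive in $\Omega$, you via the affine barrier $g$, which is precisely the first-order linearization of $H$ at $x_0$---and then both extract $\rho_t=\beta_t^{-2}$ from the two-sided Hessian bound of \cref{coro: Hessian bound}.
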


\begin{proof}
    For brevity, we may drop any sub-indices related to $t$. Define $H(x):= \nabla h(\nabla \phi(x))$ for $ x \in \overline{\Omega}$ and $\chi(x) := \inner{H(x)}{\nu(x)}$ for $x\in \partial \Omega$. Since $\phi \in \CC^{2, \alpha}(\overline{\Omega})$ and $h \in \CC^{2,\alpha}(\overline{\Omega'})$, $\chi$ is continuous on $\partial \Omega$ and admits minimum at some point $x_0\in \partial \Omega$. Let $\{{e}_i\}_{i=1}^d$  be an orthonormal basis of $\R^d$. Up to a translation and rotation, we may assume $x_0= {0}$, ${e}_1,\dots, {e}_{d-1}$ are tangential to $\partial \Omega$ at ${0}$, and $\nu({0}) = e_d$.
    Since $h(\nabla \phi) <0$ in $\Omega$ and equals zero on $\partial \Omega$, we have
    \begin{equation}\label{eq: tangential}
         D_j H({0}) = \inner{DH({0})}{e_j} = \inner{D^2\phi({0}) \nabla h(\nabla \phi({0}))}{e_j} = D_{jk} \phi D_k h = 0\quad {\rm on} \,\,\partial \Omega, 
    \end{equation}
    for $j = 1,\dots, d-1,$
    and
     \begin{equation}\label{eq: normal}
    D_d H({0}) = \inner{DH({0})}{e_d} = \inner{D^2\phi({0}) \nabla h(\nabla \phi({0}))}{e_d} = D_{dk}\phi D_kh \geq 0\quad \rm on \,\,\partial \Omega,
         \end{equation}
    where $D_{dk} \phi := \partial_{e_d}\partial_{{e}_k} \phi({0})$, $D_k h := \partial_{{e}_k} h(D\phi({0}))$, and we are summing over repeated indices. For ease of the notations, the functions are assumed to be evaluated at ${0}$ unless explicitly specified in the rest of this proof. In particular, combining \eqref{eq: tangential} and  \eqref{eq: normal} implies that
    \begin{equation}\label{eq: combined}
            D^2\phi \nabla h = D H,
    \end{equation}
    and 
    \begin{equation}\label{eq: nablaHDnHe}
        D H = D_d H \,  e_d=(0, \dots, D_d H )
    \end{equation}
    where $\nabla h = (D_1h,\dots, D_dh)^\T$.
    Since $\phi$ is strictly convex, $D^2\phi$ is invertible on $\partial \Omega$, and thus
    \begin{equation}\label{eq: Dnh}
            D_d h = e_d^\T [D^2\phi]^{-1} D H.
    \end{equation}   
   Taking the square of both sides of \eqref{eq: Dnh}, we obtain that
    \begin{align*}
            (D_dh)^2 &= (e_d^\T [D^2\phi]^{-1} D H)( e_d^\T [D^2\phi]^{-1} D H) \\
           \eqref{eq: nablaHDnHe}\implies  &= (e_d^\T [D^2\phi]^{-1} (D_dH  \,e_d))( e_d^\T [D^2\phi]^{-1} (D_dH\, e_d)  )\\
         &= e_d^\T [D^2\phi]^{-1}e_d(e_d D_dH)^\T [D^2\phi]^{-1} (D_d He_d)\\
        \eqref{eq: nablaHDnHe}\implies    & = e_d^\T [D^2\phi]^{-1}e_d (D H)^\T [D^2 \phi]^{-1} D H   \\
     \eqref{eq: combined}\implies        & =  e_d^\T [D^2\phi]^{-1}e_d (\nabla h)^\T D^2\phi [D^2 \phi]^{-1} D^2\phi \nabla h\\
            &=  e_d^\T [D^2\phi]^{-1} e_d (\nabla h)^\T D^2\phi \nabla h.
    \end{align*}
    Hence, 
    \begin{equation*}
         \chi({0}) = \inner{\nabla h(\nabla \phi({0}))}{\nu({0})} = D_d h = \sqrt{ e_d^\T [D^2\phi]^{-1}e_d (\nabla h)^\T D^2\phi \nabla h}. 
    \end{equation*}
By applying~\cref{coro: Hessian bound}, we derive the bound
    \begin{equation*}
        \chi({0}) \geq \beta_t^{-2} \sqrt{e_d^\T e_d \|\nabla h\|^2}, 
    \end{equation*}
    which implies the existence of a constant \(\rho_t > 0\) such that $\chi({0}) \geq \rho_t$ as $\|\nabla h\| = 1 $ by assumption. Since $ \inf_{x\in \partial \Omega}\chi(x)=\chi({0})$, the claim follows. 
\end{proof}

 So far, we have shown that the linearized Monge--Amp\`ere operator $D_\phi \Gamma_t(\phi_t)$ is a second-order elliptic differential operator with strictly oblique boundary condition. Moreover, as shown in the following lemma, we may prove that, if a solution exists, it is unique up to a constant shift.

 \begin{lemma}\label{lemma: uniqueness linearized MA}
Under the setting of \cref{Theorem:MainWithTime}, let $\xi\in \CXt$ be a solution to 
\begin{align*}
     {\rm tr}([D^2\phi_{t}]^{-1} D^2\xi ) +\frac{ \langle \nabla p_{t}(\nabla \phi_{t}), \nabla \xi\rangle  }{p_{t}(\nabla \phi_{t})} & = 0\quad \rm in\,\, \Omega,\\
  \langle \nabla h_{t}(\nabla \phi_{t}), \nabla \xi\rangle&=0 \quad \rm on \,\, \partial \Omega.
\end{align*}
Then $\xi=0$. 
\end{lemma}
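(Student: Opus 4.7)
The plan is to exploit the fact that the linearized operator
\[
L\xi := {\rm tr}\bigl([D^2 \phi_t]^{-1} D^2 \xi\bigr) + \frac{\langle \nabla p_t(\nabla \phi_t), \nabla \xi\rangle}{p_t(\nabla \phi_t)}
\]
has \emph{no zero-order term}, combined with the strict obliqueness of the boundary operator, to run a maximum principle / Hopf lemma argument and then eliminate the resulting additive constant via the normalization built into $\CX$. By \cref{coro: Hessian bound} the operator $L$ is uniformly elliptic on $\overline{\Omega}$, and its first-order coefficients are bounded (indeed H\"older continuous) thanks to $\phi_t \in \mathcal{C}^{2,\alpha}(\overline{\Omega})$ together with $\log p_t \in \mathcal{C}^{1,\gamma}(\overline{\Omega'})$. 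The classical strong maximum principle (Gilbarg--Trudinger, Theorem~3.5) then applies: any solution $\xi$ of $L\xi = 0$ in $\Omega$ is either constant or cannot attain its maximum or minimum over $\overline{\Omega}$ in the interior of $\Omega$.

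Next, I would proceed by contradiction, assuming $\xi$ is non-constant; then $\xi$ attains its maximum over $\overline{\Omega}$ at some boundary point $x_0 \in \partial \Omega$. Since $\Omega$ is $\mathcal{C}^2$ uniformly convex, the interior sphere condition holds at $x_0$, and the Hopf boundary point lemma yields $\partial_\nu \xi(x_0) > 0$, where $\nu$ is the outer unit normal. On the other hand, because $x_0$ is also a maximum of the restriction $\xi|_{\partial \Omega}$, the tangential component of $\nabla \xi(x_0)$ must vanish. Decomposing the oblique vector as $\nabla h_t(\nabla \phi_t(x_0)) = \alpha\, \nu(x_0) + \tau$ with $\tau$ tangent to $\partial \Omega$ at $x_0$ and $\alpha = \langle \nabla h_t(\nabla \phi_t(x_0)), \nu(x_0)\rangle \geq \rho > 0$ by \cref{Lemma: strict obliqueness}, the homogeneous oblique boundary condition forces
\[
0 = \langle \nabla h_t(\nabla \phi_t(x_0)), \nabla \xi(x_0)\rangle = \alpha\, \partial_\nu \xi(x_0) + \langle \tau, \nabla \xi(x_0)\rangle = \alpha\, \partial_\nu \xi(x_0) > 0,
\]
a contradiction. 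Hence $\xi$ must be constant on $\overline{\Omega}$.

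Finally, the membership $\xi \in \CX$ closes the argument. Writing $\xi \equiv c$, the normalization becomes $c \int_{\partial \Omega_t} p_t\, d\mathcal{H}^{d-1} = 0$; since $p_t$ is uniformly bounded away from zero on $\overline{\Omega_t}$ and $\partial \Omega_t$ has positive surface measure, this forces $c = 0$, i.e.\ $\xi \equiv 0$. I do not anticipate a serious obstacle: the conceptual content is simply that strict obliqueness together with the absence of a zero-order term is exactly what converts the Hopf sign into a contradiction with the homogeneous boundary data. The only technical checks required — uniform ellipticity and bounded (H\"older) coefficients of $L$, plus the interior sphere condition at every boundary point of $\Omega$ — are already in hand from \cref{coro: Hessian bound}, \cref{Lemma: strict obliqueness}, and the $\mathcal{C}^2$ uniform convexity assumption on $\Omega$.
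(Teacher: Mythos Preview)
Your proposal is correct and follows essentially the same route as the paper: strong maximum principle to push the extremum to $\partial\Omega$, then Hopf's lemma combined with strict obliqueness (\cref{Lemma: strict obliqueness}) to contradict the homogeneous boundary condition, and finally the $\CX$ normalization to kill the constant. The only cosmetic difference is that the paper invokes an oblique version of Hopf's lemma directly (Han, Theorem~2.5), whereas you spell out the normal/tangential decomposition and use the vanishing of the tangential gradient at a boundary maximum; your treatment is in fact more explicit about the constant-versus-nonconstant dichotomy, which the paper leaves implicit.
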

 \begin{proof}

     Assume that the contrary holds.  By strong maximum principle \cite[Theorem~3.5]{gilbarg1983trudinger}, $\xi$ attains its maximum at some $x_0\in \partial \Omega$.  Let $\nu$ be the outer
normal unit vector field at $\partial \Omega$.  As $\Omega$ is uniformly convex, there exists a ball $B\subset \Omega$ such that $x_0\in \Omega$. \cref{Lemma: strict obliqueness} implies that $\inner{\nabla h_{t}(\nabla \phi_{t}) }{\nu}>0$ on $\partial \Omega$ (hence $\inner{\nabla h_{t}(\nabla \phi_{t}(x_0))}{\nu(x_0)} >0$), so that Hopf's Lemma \cite[Theorem~2.5]{han2011elliptic} yields that \(\inner{\nabla h_{t}(\nabla \phi_{t}(x_0))}{\nabla \xi(x_0)}>0,\)
    which contradicts the boundary condition.
 \end{proof}
Now we are ready to proceed with the proof of Theorem~\ref{thm: existence and uniqueness linearized MA}.  As mentioned earlier, the uniqueness result presented in Lemma~\ref{lemma: uniqueness linearized MA} motivates an application of the Fredholm alternative. It remains to identify the necessary conditions for the solvability of the linearized Monge--Ampère equation for a general \( f \in \CC^{0,\alpha}(\overline{\Omega}) \) and \( g \in \CC^{1,\alpha}(\partial{\Omega}) \).

\begin{proof}[Proof of Theorem~\ref{thm: existence and uniqueness linearized MA}]

We prove necessity and sufficiency separately. 

    \textit{Necessity:}
    Suppose $\xi\in \CXt$ solves \eqref{eq: linearized MongeAmpere general}, then by \eqref{eq:MongeAmpere} 
    \begin{equation}\label{eq: CC 1}
            \int_{\Omega} qf = \int_{\Omega} \left(p_t(\nabla \phi_t)\tr([D^2\phi_t]^{-1}D^2\xi) + \inner{\nabla p_t(\nabla\phi_t)}{\nabla \xi}\right)\det(D^2\phi_t)\,.
    \end{equation}
    Using a change of variable from $x\mapsto \nabla \phi_t(x)$ and noting that $[D^2\phi_t(\nabla \phi_t^*)]^{-1}= D^2 \phi_t^*$, \eqref{eq: CC 1} becomes
    \begin{equation}\label{eq: CC 2}
        \int_{\Omega_t} \tr(D^2\phi_t^* D^2\xi(\nabla \phi_t^*))p_t + \inner{\nabla p_t}{\nabla \xi(\nabla \phi_t^*)} = \int_{\Omega_t} \diver{p_t\nabla \xi(\nabla \phi_t^*)}.
    \end{equation}
    By the divergence theorem \cite[Appendix~C.1]{evans2022partial}, \eqref{eq: CC 2} can be written as
    \[
    \int_{\Omega_t} \diver{p_t\nabla \xi(\nabla \phi_t^*)} = \int_{\partial \Omega_t} \inner{p_t\nabla \xi(\nabla \phi_t^*)}{\nabla h_t} =\int_{\partial \Omega_t } p_t g(\nabla \phi^*_t).
    \]

    \textit{Sufficiency:} By \cref{coro: Hessian bound}, \cref{Lemma: strict obliqueness}, and \cite[Theorem~6.31]{gilbarg1983trudinger}, the operator 
    \begin{align*}
        \mathbb{M}: \CC^{2,\alpha}(\overline{\Omega})& \to \CC^{0,\alpha}(\overline{\Omega})\times  \CC^{1,\alpha}(\partial \Omega )\\
        \xi&\mapsto \left(\begin{array}{c}
             L \\
             B 
        \end{array}\right)= \left(\begin{array}{c}
             {\rm tr}([D^2\phi_t]^{-1} D^2\xi ) +\frac{ \langle \nabla p_t(\nabla \phi_t), \nabla \xi\rangle  }{p_t(\nabla \phi_t)} \\
             \langle \nabla h_t(\nabla \phi_t), \nabla \xi\rangle + \xi
        \end{array}\right)
    \end{align*}
is bounded with bounded inverse $\mathbb{M}^{-1}$. The necessary condition above implies that if $\xi\in \CXt$,  then 
$$  \int_{\Omega} q L(\xi) = \int_{\partial \Omega_t} p_t [B(\xi)-\xi](\nabla \phi_t^*)= \int_{\partial \Omega_t} p_t B(\xi)(\nabla \phi_t^*), $$
so that $\mathbb{M}(\xi)\in \CYt$. As a consequence, $\mathbb{M}\vert_{\CXt}:\CXt\to  \CYt$ is a bijection. Call   $\CL:\CYt \to \CXt$ its (bounded) inverse  and consider the equation 
\begin{equation}\label{eq: Fredholm 1}
    \xi - \CL(0,\xi) = \CL(f,g).
\end{equation}
Here, the boundedness of $\CL$ follows from the inverse mapping theorem \cite[Corollary~2.7]{brezis2011functional}. We see that $\xi \in \CXt$ solves \eqref{eq: linearized MongeAmpere general} if and only if $\xi$ solves \eqref{eq: Fredholm 1}. Note that $\CL(0,\xi)$ is a well-defined element of $\CXt$ as  $(0, \xi)\in \CYt$. 

Now we apply the Fredholm alternative \cite[Theorem~5.3]{gilbarg1983trudinger} to conclude the solvability of \eqref{eq: Fredholm 1}. Define $T:\CXt\ni g \mapsto \CL[0,g]\in  \CXt$. Since $\CL:\CYt \to \CXt$ is bounded and the map
$${\bf i}: \CC^{2,\alpha}(\overline{\Omega})\cap \CXt\ni g \mapsto g \in \CC^{1,\alpha}(\partial \Omega) \cap \left\{ \xi \in \CC^{1,\alpha}(\partial \Omega) : \int_{\partial\Omega_t} p_t \xi(\nabla \phi_t^*) = 0 \right\} $$
is compact, 
the identity 
$ T(g)=\CL(0, {\bf i}(g) )$ yields  the compactness of  $T$. Since \eqref{eq: Fredholm 1} is equivalent to
\begin{equation}\label{eq: Fredholm 2}
    \xi - T\xi = \CL(f,g),
\end{equation}
 the Fredholm alternative implies that \eqref{eq: Fredholm 2} admits a unique solution $\xi \in \CXt$ if and only if the homogeneous equation
\begin{equation}\label{eq: Fredholm 3}
    \xi - T\xi = 0
\end{equation}
has only a trivial solution. As \eqref{eq: Fredholm 3} is equivalent to \eqref{eq: linearized MongeAmpere general},  according to  \cref{lemma: uniqueness linearized MA}, $\xi = 0$ is the unique solution of  \eqref{eq: linearized MongeAmpere general} in $\CXt$. The result follows. 
\end{proof}

\subsection{The Implicit Function Theorem}\label{Sec:implicit}
\cref{thm: existence and uniqueness linearized MA} guarantees the invertibility of \( D_\phi \Gamma_t(\phi_t) \) between the spaces \( \CXt \) and \( \CYt \). However, the range of \( \Gamma \) may be beyond \( \CYt \). Therefore, we modify the functional as follows to satisfy the compatibility conditions \eqref{eq: compatibility condition}. Set $t\in I$ and consider
\begin{align}\label{eq: GammaTilde}
    \tilde{\Gamma}^{(t)}: I \times (\CC^{2,\alpha}_{>0}(\bar{\Omega})\cap \CXt)  &\to \CYt \notag \\
     \left( \begin{array}{c}
         s  \\
         \phi
    \end{array}\right)&\mapsto \left(\begin{array}{c} 
        \Gamma_s^{(1)}(\phi) - \int_{\Omega} q \Gamma_s^{(1)}(\phi) + \int_{\partial \Omega_t} p_t [\Gamma_s^{(2)}(\phi)](\nabla \phi_t^*)    \\
          \Gamma_s^{(2)}(\phi) 
    \end{array}\right). 
\end{align}
The following result shows that for each $t\in I$,  the implicit function theorem (see e.g., \cite[Theorem 17.6]{gilbarg1983trudinger} or \cite[Theorem~15.1 and Corollary~15.1]{deimling2013nonlinear}) can be applied over $\tilde{\Gamma}^{(t)}$.   
%and {\it a fortiori} concludes the proof of Theorem~\ref{Theorem:MainWithTime}. 
\begin{lemma}\label{LemmaFrechetTilde}
Let $t\in I$ be as above. Under the assumptions of \cref{Theorem:MainWithTime}, the following holds: 
 \begin{enumerate}[label = (\arabic*)]
 \item $\tilde{\Gamma}^{(t)}_s(\phi_s)=0$ if and only if $\Gamma_s(\phi_s)=0$. 
     \item There exist an open set $I'\times \CU_t  \subset I\times (\CC^{2,\alpha}_{>0}(\bar{\Omega})\cap \CXt)  $ with $(t,  \phi_t)\in I\times \CU_t$  such that   $\tilde{\Gamma}^{(t)}  $ is $\CC^1$ in $I'\times \CU_t$.  
     \item The Fréchet derivative of $\tilde{\Gamma}^{(t)}$ w.r.t. $\phi\in \CXt$ at $(t,\phi_t)\in I\times (\CC^{2,\alpha}_{>0}(\bar{\Omega})\cap \CXt) $, namely   $D_\phi\tilde \Gamma_t(\phi_t) $,  satisfies $D_\phi\tilde \Gamma_t(\phi_t) h= D_\phi  \Gamma_t(\phi_t)h $ for all $h\in \CXt$. 
     \item The bounded linear operator $D_\phi\tilde \Gamma_t(\phi_t) :\CXt\to \CYt$ is invertible.   
 \end{enumerate}

\end{lemma}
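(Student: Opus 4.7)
The plan is to verify the four items sequentially, drawing on \cref{LemmaFrechet} for the $\mathcal{C}^1$ regularity of $\Gamma$ and on \cref{thm: existence and uniqueness linearized MA} for the solvability of the linearized Monge--Amp\`ere equation. The one non-obvious ingredient binding everything together is that the derivation used to prove ``necessity'' in \cref{thm: existence and uniqueness linearized MA} is in fact an unconditional computational identity: for every $h \in \mathcal{C}^{2,\alpha}(\overline{\Omega})$, combining integration by parts, the change of variable $y = \nabla \phi_t(x)$, and the Monge--Amp\`ere relation $\det(D^2\phi_t) = q/p_t(\nabla \phi_t)$ should yield
\begin{equation}\label{eq: master identity proposal}
\int_\Omega q\,[D_\phi\Gamma_t^{(1)}(\phi_t)\,h] \;=\; \int_{\partial \Omega_t} p_t \,([D_\phi\Gamma_t^{(2)}(\phi_t)\,h](\nabla \phi_t^*)).
\end{equation}
This identity is what will force the correction terms introduced in the definition of $\tilde\Gamma^{(t)}$ to drop out after linearization, making items (3) and (4) essentially automatic.

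For item (1), the implication $\Gamma_s(\phi_s) = 0 \Rightarrow \tilde\Gamma^{(t)}_s(\phi_s) = 0$ is immediate, since each summand of the correction vanishes. Conversely, if $\tilde\Gamma^{(t)}_s(\phi_s) = 0$, the second coordinate yields $h_s(\nabla \phi_s) \equiv 0$ on $\partial \Omega$, so $\nabla \phi_s(\partial \Omega) \subseteq \partial \Omega_s$; strict convexity of $\phi_s$ makes $\nabla\phi_s$ a homeomorphism onto its image, and an invariance-of-domain argument identifies that image with $\overline{\Omega_s}$. The first coordinate then forces $\Gamma_s^{(1)}(\phi_s) \equiv c$ for some constant $c$; rewriting this as $\det(D^2\phi_s)\,p_s(\nabla \phi_s) = e^c q$ and integrating over $\Omega$ with the change of variable $y = \nabla \phi_s(x)$ gives $\int_{\Omega_s} p_s = e^c \int_\Omega q = e^c$, whence $c = 0$. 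Item (2) is inherited from \cref{LemmaFrechet}: the added corrections are bounded linear functionals (integration against $q$, or against $p_t$ composed with the fixed map $\nabla \phi_t^*$) applied after the $\mathcal{C}^1$ maps $\Gamma^{(1)},\Gamma^{(2)}$, so the composite remains $\mathcal{C}^1$ on the same neighborhood. A direct computation using $\int_\Omega q = 1$ also confirms that $\tilde\Gamma^{(t)}(s,\phi)$ indeed lies in $\CY$.

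For item (3), the chain rule and linearity of the correction give, for every $h \in \CX$,
\begin{equation*}
D_\phi \tilde\Gamma^{(t)}_t(\phi_t)\,h \;=\; D_\phi \Gamma_t(\phi_t)\,h \;+\; \begin{pmatrix} -\int_\Omega q\,[D_\phi\Gamma_t^{(1)}(\phi_t)\,h] + \int_{\partial\Omega_t} p_t\,([D_\phi\Gamma_t^{(2)}(\phi_t)\,h](\nabla\phi_t^*)) \\[4pt] 0 \end{pmatrix},
\end{equation*}
and \eqref{eq: master identity proposal} annihilates the correction, so $D_\phi \tilde\Gamma^{(t)}_t(\phi_t) = D_\phi \Gamma_t(\phi_t)$ as maps $\CX \to \CY$. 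Item (4) is then immediate from \cref{thm: existence and uniqueness linearized MA}, which states precisely that the linearized Monge--Amp\`ere operator is a bounded bijection $\CX \to \CY$; the open mapping theorem supplies boundedness of the inverse.

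The main subtlety lies in the reverse direction of item (1)---specifically, in justifying that the boundary condition $h_s(\nabla\phi_s) = 0$ combined with strict convexity upgrades to the \emph{equality} $\nabla\phi_s(\Omega) = \Omega_s$ (not merely a proper subset), which is what makes the change-of-variable argument identifying $c = 0$ close. Everything else is bookkeeping built on top of \cref{LemmaFrechet} and \cref{thm: existence and uniqueness linearized MA}.
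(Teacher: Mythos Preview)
Your proposal is correct and follows essentially the same route as the paper: item (2) is inherited from \cref{LemmaFrechet}, item (4) from item (3) and \cref{thm: existence and uniqueness linearized MA}, item (3) from the integration-by-parts identity \eqref{eq: master identity proposal} (which the paper phrases as ``$D_\phi\Gamma_t(\phi_t)(\CX)\subset\CY$'' but is the same computation), and item (1) via the change of variables $y=\nabla\phi_s(x)$ to force the constant $c=0$. The subtlety you flag about upgrading $h_s(\nabla\phi_s)=0$ on $\partial\Omega$ to the equality $\nabla\phi_s(\Omega)=\Omega_s$ is real and the paper glosses over it too; in practice it follows because $\nabla\phi_s$ is a $\mathcal{C}^1$ diffeomorphism of $\overline{\Omega}$ onto its image (by strict convexity), the boundary is mapped into $\partial\Omega_s$, and both $\Omega$ and $\Omega_s$ are simply connected, so a standard topological degree or invariance-of-domain argument forces surjectivity onto $\overline{\Omega_s}$.
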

\begin{proof}
    The second point is a direct consequence of \cref{LemmaFrechet} and the fourth point holds due to the third point and \cref{thm: existence and uniqueness linearized MA}. 

    {\it Proof of (1).} We prove that if $\phi\in \CC^{2,\alpha}_{>0}(\bar{\Omega})$ solves 
    $$ \det( D^2 \phi) = C\frac{q}{p_s(\nabla \phi)} \quad {\rm in}\,\, \Omega,\ 
  h_s(\nabla  \phi)  = 0\quad {\rm in}\,\, \partial \Omega\notag, $$
    for some $C\in \R$, then $C=1$. Note that $ h_s(\nabla  \phi)  = 0$ in $\partial \Omega$ implies $ \nabla  \phi(\Omega)  = \Omega_s$, then 
  $$ C=\int_{\Omega} \det( D^2 \phi)  p_s(\nabla \phi) = \int_{\Omega_s} p_s=1  $$
  and the claim follows. 

  {\it Proof of (3).} From the linearity of integration, one have for $\xi\in \CXt$
\[
D_\phi \tilde{\Gamma}^{(t)}_t(\phi_t)(\xi) = \left( \begin{array}{c}
    D_\phi \Gamma_t^{(1)}(\phi_t)(\xi) - \int_{\Omega} q D_\phi \Gamma_t^{(1)}(\phi_t)(\xi) + \int_{\partial \Omega_t} p_t[D_\phi \Gamma_t^{(2)}(\phi_t)(\xi)](\nabla \phi_t^*)   \\
       D_\phi \Gamma_{t}^{(2)}(\phi_t)(\xi)
\end{array} \right). 
\]
In view of $D_\phi \Gamma_t(\phi_t)(\CXt)=\CYt$ by \cref{thm: existence and uniqueness linearized MA}, 
$$ \int_{\Omega} q D_\phi \Gamma_t^{(1)}(\phi_t)(\xi)= \int_{\partial \Omega_t} p_t[D_\phi\Gamma_t^{(2)}(\phi_t)(\xi)](\nabla \phi_t^*),  $$
then the result follows.
\end{proof}
The implicit function theorem in Banach spaces (see, e.g.,  \cite[Theorem~15.1]{deimling2013nonlinear}) implies that  $I \ni t\mapsto \phi_t\in \CC^{2, \alpha}(\overline{\Omega})$ is of class  $\CC^1$, meaning that $\{\phi_t\}_{t\in I}$ is a $\CC^1$ curve. 
Moreover, it holds that
\[
[D_\phi \tilde \Gamma_t^{(t)}(\phi_t)](\partial_t \phi_t) = -D_t \tilde \Gamma_t^{(t)}(\phi_t),
\]
so that $\partial_t \phi_t$ is the unique solution of 
\begin{align}\label{LinearizedMAProofTosimplify}
   {\rm tr}([D^2\phi_{t}]^{-1} D^2\xi ) +\frac{ \langle \nabla p_{t}(\nabla \phi_{t}), \nabla \xi\rangle  }{p_{t}(\nabla \phi_{t})} & = -\frac{\partial_t p_t(\nabla \phi_t)}{p_t(\nabla \phi_t)} -\int_{\Omega} q\frac{\partial_t p_t(\nabla \phi_t) }{p_t(\nabla\phi_t) } +\int_{\partial \Omega_t} p_t {\partial_t h_t }  \quad \rm in\,\, \Omega,\\
  \langle \nabla h_t(\nabla \phi_{t}), \nabla \xi\rangle&=-\partial_t h_t(\nabla \phi_t) \quad \rm on \,\, \partial \Omega.\notag
\end{align} 
We focus now on simplifying the right-hand side of \eqref{LinearizedMAProofTosimplify}. To do so, we prove 
\begin{equation}\label{CompatibilityOflinearizedRight}
   \int_{\Omega_t} q \partial p_t =\int_{\partial \Omega_t} p_t {\partial_t h_t },  
\end{equation}
which follows as a consequence of the following Reynolds transport theorem (see e.g., \cite[Appendix~C.4]{evans2022partial}) for implicitly defined surfaces. 
\begin{lemma}
    Let $\{h_t\}_{t\in I}$, $\{\Omega_t\}_{t\in I}$ and $\Omega$ be as in \cref{Theorem:MainWithTime}. Then for every $\CC^1$ curve $\{f_t\}_{t\in I}$ on $\CC(\Omega')$ it holds that 
    $$ \partial_t \int_{\Omega_t} f_t =  \int_{\Omega'} \partial_t f_t - \int_{\partial\Omega_t} f_t \partial_t h_t .  $$
\end{lemma}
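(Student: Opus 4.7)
My plan is to regularize the sharp indicator $\mathbf{1}_{\Omega_t}$ as $\chi_\varepsilon(h_t)$ for a smooth one-dimensional cutoff $\chi_\varepsilon$, differentiate the resulting smooth integral in $t$, and then let $\varepsilon\to 0$, identifying the boundary term via the coarea formula. (Consistent with the application of the formula to prove \eqref{CompatibilityOflinearizedRight}, I read $\int_{\Omega'}\partial_t f_t$ in the statement as $\int_{\Omega_t}\partial_t f_t$.)

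Concretely, choose a smooth non-increasing $\chi_\varepsilon:\mathbb{R}\to[0,1]$ with $\chi_\varepsilon\equiv 1$ on $(-\infty,-\varepsilon]$ and $\chi_\varepsilon\equiv 0$ on $[0,\infty)$, so that $\int_{\mathbb{R}}\chi_\varepsilon' = -1$. Setting $F_\varepsilon(t):=\int_{\Omega'} f_t\,\chi_\varepsilon(h_t)\,dx$ and using that $\partial\Omega_t$ has Lebesgue measure zero, dominated convergence gives $F_\varepsilon(t)\to \int_{\Omega_t} f_t =: F(t)$ pointwise in $t$. Since $\{f_t\}$ and $\{h_t\}$ are $\mathcal{C}^1$ curves in their respective spaces, $F_\varepsilon$ is $\mathcal{C}^1$ and
\[
F_\varepsilon'(t) = \int_{\Omega'}\partial_t f_t\,\chi_\varepsilon(h_t)\,dx + \int_{\Omega'} f_t\,\chi_\varepsilon'(h_t)\,\partial_t h_t\,dx,
\]
where the first term converges to $\int_{\Omega_t}\partial_t f_t$ by dominated convergence.

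For the second term, I use that $\|\nabla h_t\|=1$ on $\partial\Omega_t$ together with $h_t\in \mathcal{C}^{2,\alpha}(\overline{\Omega'})$ to secure $\|\nabla h_t\|\ge 1/2$ on a tubular neighborhood of $\partial\Omega_t$. For $\varepsilon$ small, $\chi_\varepsilon'(h_t)$ is supported in $\{-\varepsilon\le h_t\le 0\}$, which lies inside this neighborhood, so the coarea formula applies and yields
\[
\int_{\Omega'} f_t\,\chi_\varepsilon'(h_t)\,\partial_t h_t\,dx = \int_{-\varepsilon}^{0}\chi_\varepsilon'(s)\,G_t(s)\,ds,\qquad G_t(s) := \int_{\{h_t = s\}} \frac{f_t\,\partial_t h_t}{\|\nabla h_t\|}\,d\mathcal{H}^{d-1}.
\]
Continuity of $G_t$ at $s=0$ follows from an implicit-function parametrization of the level sets $\{h_t=s\}$ near $\partial\Omega_t$ combined with continuity of the integrand. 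Together with $\int\chi_\varepsilon'=-1$, this gives convergence of the displayed quantity to $-G_t(0) = -\int_{\partial\Omega_t} f_t\,\partial_t h_t$, since $\|\nabla h_t\|=1$ on $\partial\Omega_t$.

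The main obstacle is to upgrade these pointwise-in-$t$ limits to the statement that $F$ itself is $\mathcal{C}^1$ with the claimed derivative. I would do this by establishing that $F_\varepsilon'$ converges uniformly on compact subsets of $I$, which reduces to a uniform-in-$t$ modulus of continuity for $G_t(\cdot)$ at $s=0$. This follows from the uniform bounds on $\|f_t\|_{\mathcal{C}(\Omega')}$, $\|\partial_t f_t\|_{\mathcal{C}(\Omega')}$, $\|h_t\|_{\mathcal{C}^{2,\alpha}(\overline{\Omega'})}$, and $\|\partial_t h_t\|_{\mathcal{C}^{1,\alpha}(\overline{\Omega'})}$ available on compact $t$-subintervals by the $\mathcal{C}^1$ assumption on the curves, combined with the uniform transversality $\|\nabla h_t\|\ge 1/2$ near $\partial\Omega_t$. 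Together with pointwise convergence $F_\varepsilon\to F$, this yields the identity on $\mathrm{int}(I)$, and one-sided analogues handle any endpoints of $I$ that lie in $I$.
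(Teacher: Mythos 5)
Your proof is essentially correct but follows a genuinely different route from the paper. The paper constructs the flow $T_s=(\nabla h_{t+s})^{-1}\nabla h_t$, which (by the Gauss-map correspondence for uniformly convex bodies) carries $\partial\Omega_t$ onto $\partial\Omega_{t+s}$, derives $\partial_t h_t=-\langle \left.\partial_s\right|_{s=0}T_s,\nabla h_t\rangle$ on $\partial\Omega_t$ from a Taylor expansion of $h_{t+s}(T_s)=0$, and then invokes the classical Reynolds transport theorem with $\nabla h_t$ as the unit outer normal; you instead mollify the indicator as $\chi_\varepsilon(h_t)$, differentiate under the integral, identify the boundary contribution through the coarea formula, and pass to the limit via uniform (in $t$, on compacts) convergence of $F_\varepsilon'$. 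Your route is more self-contained and is actually better matched to the hypothesis that $f_t$ is merely continuous in space: the textbook Reynolds theorem the paper cites is stated for smooth integrands, so the paper's argument implicitly needs an approximation in $x$, which your coarea argument avoids; the price is the extra technical layer (continuity of the level-set integrals $G_t(s)$ at $s=0$ and its uniformity in $t$), which you only sketch, but the ingredients you name — uniform transversality $\|\nabla h_t\|\geq 1/2$ on the shell $\{-\varepsilon\leq h_t\leq 0\}$ (which indeed follows uniformly in $t$ from $\kappa^{-1}\mathrm{I}_d\leq D^2h_t\leq\kappa\,\mathrm{I}_d$ and $\|\nabla h_t\|=1$ on $\partial\Omega_t$), bounds on the curves on compact $t$-intervals, and a level-set parametrization — do suffice, and the final step (pointwise convergence of $F_\varepsilon$ plus locally uniform convergence of $F_\varepsilon'$ implies $F\in\mathcal{C}^1$ with the limiting derivative) is standard. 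Your reading of the volume term as $\int_{\Omega_t}\partial_t f_t$ rather than $\int_{\Omega'}\partial_t f_t$ is the right one: it is what the classical transport theorem invoked in the paper's own proof produces, and it is what the application to \eqref{CompatibilityOflinearizedRight} uses, so the statement's $\int_{\Omega'}$ is a typo that you correctly repaired.
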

\begin{proof}
Set $t\in I$ and 
    define the curve $T_s=(\nabla h_{t+s})^{-1} \nabla h_{t} $. By assumption and by the inverse function theorem, $\{T_s\}_{s\in I}\subset \CC(\Omega')$ is $ \CC^1$,   $h_{t+s}(T_s)=0$ in $\partial\Omega_t$, and $T_s(\Omega_t)=\Omega_s$. Applying the second-order Taylor development on   $h_{t+s}$ provides the existence of a curve $\{\omega_s\}_{s\in I}$ on $\CC(\partial \Omega_t)$ such that $o(\|\omega_s \|_{\CC(\partial \Omega_t)} ) =o(s)$ and 
    $$ h_t= 0= h_{t+s}(T_s)=h_{t+s}+ \langle \nabla h_{t+s}, T_s- T_0  \rangle + \omega_s\quad \text{on}\ \partial \Omega_t.  $$
    As a consequence, we get the relation 
    \begin{equation}
        \label{eq:Lastequality}
        \partial_t h_t = -\langle \left.\partial_s \right|_{s=0} T_s, \nabla h_{t}  \rangle \quad \text{on}\ \partial \Omega_t.
    \end{equation}
    Since $\nabla h_t$ is the unit outer-normal vector field to $\partial \Omega_t$, Reynolds transport theorem implies that 
    \begin{align*}
        \partial_t \int_{\Omega_t} f_t &=  \int_{\Omega'} \partial_t f_t + \int_{\partial\Omega_t} f_t  \left\langle  \left.\partial_s \right|_{s=0} T_s, \nabla h_t \right\rangle 
    \end{align*}
and the result follows by \eqref{eq:Lastequality}. 
\end{proof}

\section*{Acknowledgments}
The authors thank Eustasio del Barrio, Nicolás García Trillos, Alejandro Garriz Molina, Tudor Manole, Gilles Mordant and Marcel Nutz for helpful comments.

\bibliographystyle{plain}
 \bibliography{ref_no_url}
 
  \end{document}